\numberwithin{equation}{section}
\theoremstyle{plain}
\newtheorem{theorem}{Theorem}[section] 
\newtheorem{lemma}[theorem]{Lemma} 
\newtheorem{proposition}[theorem]{Proposition} 
\newtheorem{proposition-definition}[theorem]{Proposition-Definition} 
\newtheorem{corollary}[theorem]{Corollary}
\theoremstyle{definition}
\newtheorem{definition}[theorem]{Definition}
\newtheorem{example}[theorem]{Example}
\theoremstyle{remark}
\newtheorem{remark}[theorem]{Remark}
\newcommand{\eps}{\varepsilon}
\newcommand{\mcal}{\mathcal}
\newcommand{\NN}{\mathbb{N}}
\newcommand{\ZZ}{\mathbb{Z}}
\newcommand{\QQ}{\mathbb{Q}}
\newcommand{\CC}{\mathbb{C}}
\newcommand{\FF}{\mathbb{F}}
\newcommand{\A}{\mathbb{A}}
\newcommand{\PP}{\mathbb{P}}
\DeclareMathOperator{\cha}{char}
\DeclareMathOperator{\Mor}{Mor}
\DeclareMathOperator{\evalu}{ev}
\def\OK{\mathcal{O}}
\def\RR{\mathbb{R}}
\def\NN{\mathcal{N}}
\def\M{\mathcal{M}}
\DeclareMathOperator{\PGL}{PGL}
\renewcommand{\tilde}{\widetilde}
\renewcommand{\bar}{\overline}
\title{Linear growth and moduli spaces of rational curves}
\author{Jakob Glas}
\date{\today}
\address{Leibniz Universität Hannover\\
Welfengarten 1\\
30167 Hannover\\
Germany}
\email{glas@math.uni-hannover.de}
\subjclass[2020]{14H10 (11D45, 11P55, 14G05, 14J70)}
\begin{document}

\begin{abstract}
    Working in positive characteristic, we show how one can use information about the dimension of moduli spaces of rational curves on a Fano variety $X$ over $\FF_q$ to obtain strong estimates for the number of $\FF_q(t)$-points of bounded height on $X$. Building on work of Beheshti, Lehmann, Riedl and Tanimoto~\cite{BeheshtiLehmannRiedlTanimoto.dP}, we apply our strategy to del Pezzo surfaces of degree at most 5. In addition, we also treat the case of smooth cubic hypersurfaces and smooth intersections of two quadrics of dimension at least 3 by showing that the moduli spaces of rational curves of fixed degree are of the expected dimension. For large but fixed $q$, the  bounds obtained come arbitrarily close to the linear growth predicted by the Batyrev--Manin conjecture. 
\end{abstract}

\maketitle

\thispagestyle{empty}
\setcounter{tocdepth}{1}

\section{Introduction}
For a smooth Fano variety $X$ over a global field $K$ endowed with an anti-canonical height function $H\colon X(K)\to \RR_{\geq 0}$ and a Zariski open subset $U\subset X$, define the counting function
\[
N_U(B)=\#\{x\in U(K)\colon H(x)\leq B\}
\]
for $B>0$. A weak version of the Batyrev--Manin conjecture~\cite{batyrev1990manin} predicts that
\begin{equation}\label{Eq: BatMan}
N_U(B)=O_\eps\left(B^{1+\varepsilon}\right)
\end{equation}
for a suitable open subset $U\subset X$. Suppose that $X\subset \PP^{n-1}$ is defined over a finite field $\FF_q$ and $K=\FF_q(t)$. In this case, a $K$-rational point $x\in X(K)$ corresponds to a morphism $\tilde{x}\colon \PP^1\to X$ defined over $\FF_q$ and we can take $\log_q H(x) = -K_X\cdot \tilde{x}_*\PP^1$. Therefore, we have a natural identification of the set of $x\in X(\FF_q(t))$ such that $H(x)=q^e$ and the set of $\FF_q$-points of the scheme $\Mor(\PP^1,X, e)$, consisting of morphisms $f\colon \PP^1\to X$ such that $-K_X\cdot f_*\PP^1=e$. Via deformation theory~\cite[Theorem 2.6]{DebarreHighdim} one can show that the dimension of every non-empty irreducible component of $\Mor(\PP^1,X,e)$ is at least $e+\dim(X)$ and we refer to this quantity as the \emph{expected dimension}. 

For an open subset $U\subset X$, denote by $\Mor_U(\PP^1,X,e)$ the open subscheme of $\Mor(\PP^1,X,e)$ consisting of morphisms $f\colon \PP^1\to X$ such that $f(\PP^1)\cap U\neq \emptyset$. In this setting (in unpublished notes) Batyrev developed a heuristic that leads to \eqref{Eq: BatMan}, based on the following three assumptions:
\begin{enumerate}
    \item there exists a Zariski open subset $U\subset X$ such that every irreducible component of $\Mor_U(\PP^1,X,e)$ is of the expected dimension,
    \item the number of irreducible components of $\Mor(\PP^1,X,e)$ grows like a polynomial in $e$,
    \item a geometrically irreducible variety $Z$ over $\FF_q$ contains approximately $q^{\dim(Z)}$ points.
\end{enumerate}
In general, (1) may fail if we take $U=X$, which is related to the existence of accumulating subvarieties. By the Lang--Weil estimate, we know that an irreducible variety $Y$ over $\FF_q$ contains $q^{\dim Y}(1+O(q^{-1/2}))$ $\FF_q$-points. Therefore, (3) is true as $q\to\infty$. 

Of course, it is natural to try and see to what extent Batyrev's heuristic can be made rigorous. This would be particularly interesting as it offers a strategy towards the Batyrev--Manin conjecture exploiting the geometric nature of $\FF_q(t)$ that is not available in the integer setting. One can express the number of $\FF_q$-points of $\Mor_U(\PP^1,X,e)$ as the alternating sum of the traces of the action of the Frobenius on the compactly supported cohomology groups of $\Mor_U(\PP^1,X)$ via the Grothendieck--Lefschetz trace formula. This  approach is limited by our knowledge about the compactly supported cohomology groups of $\Mor_U(\PP^1,X,e)$ and understanding them is in general a very interesting, but also difficult problem.

We will not follow this route. Instead, using an elementary approach, we will show that just knowing (1) can already lead to strong \emph{upper bounds} for the number of rational points of bounded height. Building on recent work of Beheshti, Lehmann, Riedl and Tanimoto~\cite{BeheshtiLehmannRiedlTanimoto.dP} on rational curves on del Pezzo surfaces in positive characteristic, we will establish the following result. 
\begin{theorem}\label{Th: MainThmdelPezzo}
    Let $X$ be a smooth del Pezzo surface of degree $d$ over $\FF_q$ with $1\leq d\leq 5$. If we take $U$ to be the complement of the union of all rational curves $C\subset X$ such that $-K_X\cdot C\leq 1$, then 
    \[
    N_U(q^e) = O(C_d^e q^e),
    \]
    where 
    \[
    C_d=\begin{cases}
        1024 &\text{if }d=5,\\
        16 &\text{if }d=4,\\
        27 &\text{if }d=3,\\
        4^4 &\text{if }d=2,\\
        6^6 &\text{if }d=1
    \end{cases}
    \]
    and the implied constant depends on $q$, but not on $e$. 
\end{theorem}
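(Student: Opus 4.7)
The strategy is to combine the identification between $\FF_q(t)$-points of $X$ and $\FF_q$-points of the moduli space $\Mor(\PP^1,X,e)$ reviewed in the introduction with the geometric analysis of these moduli spaces provided by \cite{BeheshtiLehmannRiedlTanimoto.dP}. Decomposing by height and restricting to morphisms whose image meets $U$, one has
\[
N_U(q^e) = \sum_{e' \leq e} \#\Mor_U(\PP^1,X,e')(\FF_q),
\]
so it suffices to bound each term. For any variety $Y$ of dimension $d$ over $\FF_q$, the Lang--Weil estimate gives $\#Y(\FF_q) = O(q^d)$ with implied constant depending on $Y$. Applying this to each irreducible component of $\Mor_U(\PP^1,X,e')$ reduces the theorem to controlling two invariants of the moduli space: the dimension of each irreducible component, and the total number of components.

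The two crucial geometric inputs are the following. First, the choice of $U$ as the complement of rational curves with $-K_X \cdot C \leq 1$ --- i.e.\ of the $(-1)$-lines --- is engineered so that every irreducible component of $\Mor_U(\PP^1,X,e')$ attains the \emph{expected} dimension $e' + 2$; removing the accumulating lines eliminates the higher-dimensional components that would otherwise dominate the count. Second, the total number of irreducible components of $\Mor_U(\PP^1,X,e')$ can be bounded by a constant times $C_d^{e'}$, with $C_d$ as in the statement, the specific values reflecting the combinatorics of rational curve classes on each del Pezzo surface.

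Combining these two inputs with Lang--Weil yields $\#\Mor_U(\PP^1,X,e')(\FF_q) = O_q(C_d^{e'} q^{e'+2})$, and summing the geometric series gives
\[
N_U(q^e) = O_q\!\left(\sum_{e' \leq e} C_d^{e'} q^{e'+2}\right) = O_q(C_d^e q^e),
\]
which is the desired bound. The last step absorbs the $q^2$ into the implied constant (which is allowed to depend on $q$) and uses $C_d q > 1$ to collapse the geometric series to its largest term.

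The main obstacle is the quantitative component count, with the specific values of $C_d$: the expected-dimension statement lies essentially within the reach of the methods of \cite{BeheshtiLehmannRiedlTanimoto.dP} together with standard deformation theory, whereas bounding the number of irreducible components by $C_d^{e'}$ requires a careful case-by-case study in each degree $d \in \{1,2,3,4,5\}$, exploiting the explicit geometry of the $(-1)$-curve configurations, the effective cone, and the weighted structure of rational curve classes on each del Pezzo surface. The split into the five cases with different constants $C_d$ is a clear indication that this is where the delicate combinatorial analysis takes place.
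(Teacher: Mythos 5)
Your overall architecture --- decompose by height, invoke Theorem~\ref{Th: Sho} of Beheshti--Lehmann--Riedl--Tanimoto to get the expected dimension $e'+2$ for every component of $\Mor_U(\PP^1,X,e')$, and sum the resulting geometric series --- matches the paper. But there is a genuine gap at the quantitative heart of the argument. Lang--Weil applied to an irreducible component $Y$ gives $\#Y(\FF_q)=q^{\dim Y}(1+O(q^{-1/2}))$ with an error term controlled by the \emph{degree} of $Y$, not merely by its dimension; since the components of $\Mor_U(\PP^1,X,e')$ vary with $e'$ and their degrees are a priori unbounded, your ``implied constant depending on $Y$'' becomes an implied constant depending on $e'$, and no bound uniform in $e'$ follows. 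Bounding the \emph{number} of components by $C_d^{e'}$ would not repair this: a single component of large degree can have many more than $O(q^{\dim})$ points. What is needed is control of the \emph{total degree} of the moduli space together with the elementary uniform estimate $\#Y(\FF_q)\le \deg(Y)\,q^{\dim Y}$ (Proposition~\ref{Prop: UniformPointCount}, proved by induction on dimension via B\'ezout), used in place of Lang--Weil.

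Relatedly, the constants $C_d$ do not come from any combinatorics of $(-1)$-curve configurations or a case-by-case count of components --- no such count is ever performed. They arise from B\'ezout: one writes the anticanonical model of $X$ as a (weighted) complete intersection cut out by forms of degrees $d_1,\dots,d_R$ (five quadrics in $\PP^5$ for $d=5$, two quadrics in $\PP^4$ for $d=4$, a cubic in $\PP^3$ for $d=3$, a quartic in $\PP(2,1,1,1)$ for $d=2$, a sextic in $\PP(3,2,1,1)$ for $d=1$), substitutes polynomial parametrisations of degree $e$, and observes that $\Mor(\PP^1,X,e)$ sits as an open subscheme inside a variety cut out by $\sum_i(d_ie+1)$ forms of degree $d_i$ in the coefficients. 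Lemmas~\ref{Le: BezoutHyper} and~\ref{Le: DeltaonOpen} then bound the degree of the affine cone over the closure of $\Mor_U(\PP^1,X,e)$ by $\prod_i d_i^{d_ie+1}=O(C_d^e)$ with $C_d=\prod_i d_i^{d_i}$. Combined with $\#Y(\FF_q)\le\deg(Y)q^{\dim Y}$ and the dimension statement from Theorem~\ref{Th: Sho}, this gives $\#\Mor_U(\PP^1,X,e')(\FF_q)\ll C_d^{e'}q^{e'+2}$ uniformly in $e'$, which is exactly the step your proposal asserts but does not supply.
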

Note that for $d\geq 2$ we are removing all points on lines contained in $X$ and for $d=1$ we additionally remove the singular members of $|-K_X|$.

Currently, we are desperately far from proving \eqref{Eq: BatMan} for del Pezzo surfaces of low degree. In particular, working over $\QQ$, the sharpest estimates available are due to Salberger~\cite{TheSalberger}, showing that $N_U(B)\ll_\eps B^{3/\sqrt{d}+\eps}$ for any del Pezzo surface of degree $d$ over $\QQ$. When $d=3$, in~\cite{SalbergerCubicSurfaces} he is able to improve the exponent $\sqrt{3}$ to $12/7$. For $d=1$, over $\QQ$ we have the better upper bound $N_U(B)\ll B^{2.87}$ that follows from combining work of Bhargava et al ~\cite{bhargavaEll} with work of Helfgott and Venkatesh~\cite{HelfVenkatesh}. Assuming that a cubic surface contains three coplanar lines defined over $\QQ$, we additionally have the estimate $N_U(B)\ll_\eps B^{4/3+\eps}$ due to Heath-Brown~\cite{HeathBrownCubicConicBundle}. If one is willing to assume a deep conjecture relating the rank of an elliptic curve to its conductor, he shows~\cite{heath1998counting} that the same estimate applies to all cubic surfaces over $\QQ$. Hochfilzer and the author~\cite{glas2024rationalpointsdelpezzo} exploited the fact that this conjecture is known over $\FF_q(t)$ thanks to work of Brumer~\cite{Brumerelliptic} and generalising Heath-Brown's approach to arbitrary degree, they showed that $N_U(q^e)=O_\eps (q^{e(1+1/d+\eps)})$ for any smooth del Pezzo surface of degree $d$ over $\FF_q(t)$ with $\cha(\FF_q)>3$. It is clear that if $q$ is sufficiently large with respect to $d$ that the estimate from Theorem~\ref{Th: MainThmdelPezzo} improves upon all these results.

It seems that Theorem~\ref{Th: MainThmdelPezzo} is the first instance confirming that rational points on lines and singular members of $|-K_X|$ dominate the point count when $d=1$. In addition, Theorem~\ref{Th: MainThmdelPezzo} solves an $\FF_q(t)$-version of a problem suggested by Browning~\cite{BrowningdelPezzoSurvey}. In his survey on Manin's conjecture for del Pezzo surfaces, he observes that we do not know of a single example of a smooth cubic surface with a rational point for which $N_U(B)\ll B^{4/3-\theta}$ holds for some $0<\theta < 1/3$ and poses the challenge of providing such a bound. Theorem~\ref{Th: MainThmdelPezzo} produces such a bound for any smooth cubic surface defined over $\FF_q$ as soon as $q>27^{3/(1-3\theta)}$. 

Our proof of Theorem~\ref{Th: MainThmdelPezzo} relies on a simple estimate for the number of $\FF_q$-points on affine varieties in terms of the degree and the dimension, based on B\'ezout's inequality. Combining this with the recent work of Beheshti, Lehmann, Riedl and Tanimoto~\cite{BeheshtiLehmannRiedlTanimoto.dP}, which shows that the only irreducible components of $\Mor(\PP^1,X,e)$ that are not of the expected dimension parameterise multiple covers of lines, yields the desired conclusion. Our strategy is flexible enough to provide upper bounds in other situations. Let $X\subset \PP^{n-1}$ be a smooth Fano variety. For an open subset $U\subset X$, let $\Mor_{e,U}(\PP^1,X)$ be the space of morphisms $\PP^1\to X$ of degree $e$, meaning that $\deg f^*\OK_X(1)=e$, such that $f(\PP^1)\cap U\neq \emptyset$ and define 
\[
N_U(e)=\#\{x\in U(\FF_q(t))\colon H(x)= q^e\},
\]
where $H\colon \PP^{n-1}(\FF_q(t))\to q^\ZZ$ denotes the naive height. If $U=X$, we will simply write $\Mor_e(\PP^1,X)$ instead of $\Mor_{e,U}(\PP^1,X)$. 

\begin{theorem}\label{Thm: RatPointsCompInt}
    Let $X\subset \PP^{n-1}$ be a smooth Fano variety defined by forms of degree $d_1,\dots,  d_R$ over $\FF_q$.  Then,
    \[
    N_U(e)=O(C^e q^{\dim \Mor_{e,U}(\PP^1,X)}),
    \]
    where 
    \begin{equation}\label{Defi: C.Constant}
    C=\prod_{i=1}^R d_i^{d_i}.
    \end{equation}
\end{theorem}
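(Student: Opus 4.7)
The plan is to realize $\Mor_{e,U}(\PP^1, X)$ as an open subscheme of a projective variety $V \subset \PP^{n(e+1)-1}$ defined by explicit equations, and then combine a refined B\'ezout bound for $\sum_W \deg(W)$ (over the irreducible components $W$ of $V$) with Serre's inequality $\#W(\FF_q) \ll \deg(W)\,q^{\dim W}$ for projective varieties over $\FF_q$.

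A morphism $\PP^1 \to X$ of degree $e$ corresponds, up to $\FF_q^\times$-scaling, to an $n$-tuple $(f_1,\dots,f_n)$ of coprime homogeneous forms of degree $e$ in $\FF_q[s,t]$. Writing $f_i = \sum_{k=0}^e a_{i,k}\, s^k t^{e-k}$ identifies the ambient space of all such tuples (modulo scaling, and ignoring coprimality) with $\PP^{n(e+1)-1}$ in coordinates $(a_{i,k})$. The condition $f(\PP^1) \subset X$ becomes, for each defining form $F_j$ of $X$ of degree $d_j$, the identical vanishing of $F_j(f_1,\dots,f_n) \in \FF_q[s,t]$; this polynomial is homogeneous of degree $d_j e$ in $(s,t)$ and of degree $d_j$ in the $a_{i,k}$, so its vanishing imposes $d_j e + 1$ homogeneous equations of degree $d_j$ on the $a_{i,k}$. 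Let $V \subset \PP^{n(e+1)-1}$ be the projective variety cut out by these $\sum_{j=1}^R (d_j e + 1)$ equations; then $\Mor_{e,U}(\PP^1, X)$ sits inside $V$ as an open subscheme.

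The key input is the refined B\'ezout theorem (as in Fulton's \emph{Intersection Theory}), which bounds the sum of degrees of the irreducible components $W$ of $V$ by the product of the degrees of the defining hypersurfaces:
\[
\sum_W \deg(W) \leq \prod_{j=1}^R d_j^{d_j e + 1} = C^e \prod_{j=1}^R d_j.
\]
Since $\Mor_{e,U}(\PP^1, X)$ is open in $V$, every irreducible component $W$ of $V$ meeting $\Mor_{e,U}(\PP^1, X)$ satisfies $\dim W \leq \dim \Mor_{e,U}(\PP^1, X)$. Applying Serre's bound to each such component and summing yields
\[
N_U(e) = \#\Mor_{e,U}(\PP^1, X)(\FF_q) \leq \sum_W \#W(\FF_q) = O\!\left(C^e\,q^{\dim \Mor_{e,U}(\PP^1, X)}\right),
\]
as desired, with the implied constant depending on $X$ (through $\prod_j d_j$) and on $q$, but not on $e$.

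The main obstacle is the B\'ezout step: the $\sum_j (d_j e + 1)$ hypersurfaces defining $V$ are far from forming a regular sequence in general --- this already fails whenever $\Mor_{e,U}(\PP^1, X)$ has larger than expected dimension --- so one cannot invoke the classical version of B\'ezout. Instead one must rely on the refined form controlling the total degree of the components of an arbitrary (possibly improper) intersection, or equivalently, argue component-by-component by extracting from the given system a regular subsequence of length equal to the codimension of each irreducible component of $V$.
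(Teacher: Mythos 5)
Your proposal is correct and follows essentially the same route as the paper: realise $\Mor_{e,U}(\PP^1,X)$ as an open subscheme of the projective variety cut out by the $\sum_j(d_je+1)$ forms of degrees $d_j$ in $\PP^{n(e+1)-1}$, bound the total degree of its irreducible components by $\prod_j d_j^{d_je+1}=C^e\prod_j d_j$ via a Bézout inequality valid for improper intersections, and then count points component-by-component using $\#W(\FF_q)\ll \deg(W)\,q^{\dim W}$. The only (cosmetic) differences are that the paper works with affine cones, replaces Fulton's refined Bézout by the iterated hypersurface version of Bézout's inequality, and proves the point-counting bound from scratch by induction on dimension rather than citing Serre's bound.
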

In particular, if $\Mor_{e,U}(\PP^1,X)$ is of the expected dimension for $e\gg1$, then by taking $q$ to be large, we again come arbitrarily close to the growth predicted by \eqref{Eq: BatMan}.

Unfortunately, our understanding of the moduli spaces of rational curves in positive characteristic is not as far developed as in characteristic 0, thus limiting the applications of Theorem~\ref{Thm: RatPointsCompInt}. To provide more examples, we shall prove the following result, which extends work of Coskun and Starr~\cite{CoskunStarr} on cubic hypersurfaces and of Okamura~\cite{Okamura.dpManifolds} on intersections of two quadrics to positive characteristic.
\begin{theorem}\label{Th: ModuliSpacescubicQuadric}
    Let $X\subset \PP^{n-1}$ be a smooth cubic hypersurface or a smooth complete intersection of two quadrics over a field $K$. Assume that $\cha(K)>3$ if $X$ is a cubic hypersurface. If $\dim(X)\geq 3$,
    then every irreducible component of $\Mor_e(\PP^1,X)$ is of the expected dimension 
    \[
\begin{cases}
    e(n-4)+n-3 &\text{if }X\text{ is an intersection of two quadrics},\\
    e(n-3)+n-2 &\text{if }X\text{ is a cubic hypersurface},
\end{cases} 
    \]
    for every $e\geq 1$.
\end{theorem}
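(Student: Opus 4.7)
I plan to prove both cases (cubic hypersurface and smooth intersection of two quadrics) simultaneously by induction on $e$, using Mori's bend-and-break in the inductive step to reduce to lower degrees. Let $N_e$ denote the claimed expected dimension; in both cases one directly verifies the identity $N_{e_1}+N_{e_2}=N_e+\dim X$ whenever $e_1+e_2=e$.

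\textbf{Base case $e=1$.} The space $\Mor_1(\PP^1,X)$ is a $\PGL_2$-bundle over the Fano scheme of lines $F_1(X)\subset G(1,n-1)$, so it suffices to prove that $F_1(X)$ is smooth of the expected dimension. For a smooth intersection of two quadrics of dimension at least $3$, this follows from a classical Jacobian computation in Plücker coordinates that works in any characteristic. For a smooth cubic hypersurface of dimension at least $3$, the analogous Jacobian calculation yields smoothness of $F_1(X)$ of dimension $2n-8$ provided $\cha K>3$; this hypothesis is precisely what is needed for the relevant partial derivatives of the cubic equation to span the appropriate cotangent space.

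\textbf{Inductive step.} Fix $e\geq 2$, assume the conclusion for all $1\leq e'<e$, and let $M\subset\Mor_e(\PP^1,X)$ be an irreducible component. Deformation theory (the estimate $\dim M\geq \chi(f^*T_X)$) already gives $\dim M\geq N_e$, so I argue by contradiction from $\dim M>N_e$. I first rule out Frobenius factorisations: every $f\in M$ admits a Stein decomposition $\PP^1\xrightarrow{F^r}\PP^1\xrightarrow{g}X$ with $g$ generically \'etale of degree $e/p^r$, and the assignment $f\mapsto g$ is injective. If the general $f\in M$ had $r\geq 1$, then $M$ would embed into a component of $\Mor_{e/p^r}(\PP^1,X)$, of dimension $N_{e/p^r}<N_e$ by induction---contradicting $\dim M>N_e$. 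Hence a general $f\in M$ is generically \'etale, and therefore birational onto its image.

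Let $\overline{M}\subset\overline{M}_{0,0}(X,e)$ be the closure of the image of $M/\PGL_2$ in the Kontsevich moduli space, so $\dim\overline{M}=\dim M-3>N_e-3$. For a general point $x$ in the image of the evaluation $M\times\PP^1\to X$, the sub-locus of maps sending a fixed point of $\PP^1$ to $x$ has dimension at least $\dim M-\dim X>N_e-\dim X\geq 1$; Mori's bend-and-break (see Kollár, \emph{Rational curves on algebraic varieties}, Ch.\ II.5) then forces a one-parameter sub-family to degenerate in $\overline{M}_{0,0}(X,e)$ to a stable map with reducible source. Thus $\overline{M}$ meets a boundary divisor of $\overline{M}_{0,0}(X,e)$. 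Every such divisor is (up to an involution when $e_1=e_2$) the image of $\overline{M}_{0,1}(X,e_1)\times_X\overline{M}_{0,1}(X,e_2)$ with $e_1+e_2=e$, $e_i\geq 1$; by the inductive hypothesis (applied to the open part of each factor, plus a further stratification of their boundaries), each factor has dimension $N_{e_i}-2$, so the divisor has dimension $(N_{e_1}-2)+(N_{e_2}-2)-\dim X=N_e-4$ via the identity recalled above. But $\overline{M}\cap(\text{boundary})$ is a Weil divisor in $\overline{M}$, hence of dimension at least $\dim\overline{M}-1>N_e-4$, a contradiction.

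\textbf{Main obstacle.} The most delicate points will be the careful application of bend-and-break in positive characteristic, and in particular the handling of the potentially non-dominant case of the evaluation map $M\times\PP^1\to X$: when the image is a proper subvariety $Y\subsetneq X$, one must redo the dimension count using $Y$ in place of $X$ before invoking the induction. The characteristic hypothesis $\cha K>3$ for the cubic is used exclusively in the base case to guarantee smoothness of $F_1(X)$.
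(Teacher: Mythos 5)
Your overall architecture (induction on $e$, bend-and-break to push a too-large component into the boundary, then a dimension count on the boundary strata) is the same Harris--Roth--Starr strategy the paper follows, but there is a genuine gap in the inductive step. Your bound on the boundary divisor, $\dim\bigl(\bar{M}_{0,1}(X,e_1)\times_X\bar{M}_{0,1}(X,e_2)\bigr)\leq (N_{e_1}-2)+(N_{e_2}-2)-\dim X$, is not a consequence of the global statement ``every component of $\Mor_{e_i}(\PP^1,X)$ has dimension $N_{e_i}$''. A fiber product over $X$ only satisfies this inequality if the evaluation maps have equidimensional fibers; in general its dimension is governed by $\dim\evalu_{e_1}^{-1}(y)+\dim\evalu_{e_2}^{-1}(y)$ as $y$ ranges over $X$, and these fibers can jump on positive-codimension loci without affecting the total dimension. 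This jumping actually occurs: for a cubic hypersurface, $\dim F_1(X,x)=n-4$ for general $x$ but equals $n-3$ at Eckardt points (points where $X\cap T_xX$ is a cone with vertex $x$). The correct inductive statement is therefore pointwise --- $\dim\evalu_{e}^{-1}(x)=e(n+1-d)-2+\delta_{x\in S}$ for \emph{every} $x\in X$, with $S$ a finite exceptional set --- and one must (a) prove finiteness of $S$ and (b) carry the correction $\delta_{x\in S}$ through the induction, checking that a node mapping into $S$ costs a dimension elsewhere. Step (a) is the technical heart of the cubic case in positive characteristic: it goes through lines of type II (normal bundle $\OK_L(-1)\oplus\OK_L(1)^{n-3}$), the bound $\dim S_2\leq n-3$ on their locus, and the fact that a line carries only finitely many Eckardt points; this is where $\cha(K)>3$ is used. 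Your claim that the characteristic hypothesis is needed only for smoothness of $F_1(X)$ is therefore off the mark --- $F_1(X)$ is smooth of dimension $2n-8$ in all characteristics by Altman--Kleiman --- and your base case, which controls only the global $F_1(X)$ rather than the fibers $F_1(X,x)$, does not supply the input your own inductive step needs.

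Two smaller problems. First, your bend-and-break trigger is misstated: a family of maps through a single fixed point of dimension $\geq 1$ need not degenerate (lines through a point of $\PP^n$ do not break). One needs the fiber of the one-point evaluation to have dimension at least $\dim X$, so that the second evaluation map from the universal curve has positive-dimensional fibers and one can fix a second point before invoking the no-complete-curve statement; your hypothesis $\dim M>N_e$ does clear this threshold for $e\geq 2$ once $\dim X\geq 3$, but the count has to be done. Second, ``$\bar{M}\cap(\text{boundary})$ is a Weil divisor, hence of codimension $\leq 1$ in $\bar{M}$'' is not automatic, since $\bar{M}_{0,0}(X,e)$ is not smooth; the codimension bound itself requires a bend-and-break argument (if the boundary met $\bar{M}$ in codimension $\geq 2$, a complete curve in $\bar{M}$ avoiding it would violate the two-point bend-and-break lemma). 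The Frobenius-factorisation digression, by contrast, is unnecessary: the dimension bounds obtained this way apply to all components, separable or not.
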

In view of Theorem~\ref{Thm: RatPointsCompInt}, we immediately deduce the following estimate.
\begin{theorem}
    Under the assumptions of Theorem~\ref{Th: ModuliSpacescubicQuadric}, we have 
    \[
    N_X(e)=O(16^eq^{e(n-4)})
    \]
    for any smooth complete intersection $X\subset \PP^{n-1}$ of two quadrics over $\FF_q$ and 
    \[
    N_X(e)=O(27^eq^{e(n-3)})
    \]
    for any smooth cubic hypersurface $X\subset \PP^{n-1}$ over $\FF_q$.
\end{theorem}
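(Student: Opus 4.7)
The plan is to combine the two previously stated theorems directly, with essentially no additional work. Take $U = X$ so that $\Mor_{e,U}(\PP^1,X) = \Mor_e(\PP^1,X)$ and $N_U(e) = N_X(e)$. Then apply Theorem~\ref{Thm: RatPointsCompInt} to bound $N_X(e)$ in terms of $\dim \Mor_e(\PP^1, X)$, and use Theorem~\ref{Th: ModuliSpacescubicQuadric} to replace this dimension with its expected value.

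More explicitly, for a smooth complete intersection of two quadrics in $\PP^{n-1}$ with $\dim X \geq 3$, we have $R = 2$ and $d_1 = d_2 = 2$, so the constant in \eqref{Defi: C.Constant} is $C = 2^2 \cdot 2^2 = 16$. By Theorem~\ref{Th: ModuliSpacescubicQuadric}, every irreducible component of $\Mor_e(\PP^1, X)$ has dimension $e(n-4) + n - 3$, hence $\dim \Mor_e(\PP^1, X) = e(n-4) + n - 3$. Plugging into Theorem~\ref{Thm: RatPointsCompInt} yields
\[
N_X(e) = O\bigl(16^e \, q^{e(n-4) + n - 3}\bigr) = O\bigl(16^e \, q^{e(n-4)}\bigr),
\]
where in the last step the factor $q^{n-3}$ is absorbed into the implied constant since $q$ and $n$ are fixed. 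The cubic hypersurface case is entirely analogous: one has $R = 1$, $d_1 = 3$, so $C = 3^3 = 27$, and Theorem~\ref{Th: ModuliSpacescubicQuadric} gives $\dim \Mor_e(\PP^1, X) = e(n-3) + n - 2$, leading to $N_X(e) = O(27^e q^{e(n-3) + n - 2}) = O(27^e q^{e(n-3)})$.

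There is no real obstacle here: the two inputs have been arranged precisely so that the corollary falls out by substitution. The only care needed is to make sure we are allowed to take $U = X$ in Theorem~\ref{Thm: RatPointsCompInt} (we are, since that theorem is stated for arbitrary open $U \subseteq X$, including $U = X$ itself), and to note that the additive constant $n - 3$ or $n - 2$ in the exponent of $q$ does not depend on $e$ and therefore may be swallowed into the implicit constant of the $O$-notation.
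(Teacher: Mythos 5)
Your proposal is correct and is exactly the paper's own deduction: the paper states this theorem as an immediate consequence of Theorems~\ref{Thm: RatPointsCompInt} and~\ref{Th: ModuliSpacescubicQuadric} with $U=X$, computing $C=16$ and $C=27$ respectively and absorbing the $e$-independent factor $q^{n-3}$ (resp.\ $q^{n-2}$) into the implied constant, just as you do.
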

When $X=V(F_1,\dots, F_R)\subset \PP^{n-1}$ is a smooth complete intersection of dimension $n-1-R$, the adjunction formula implies that the divisor $-K_X$ corresponds to the line bundle $\OK_X(n-d_1-\cdots -d_R)$. In particular, in agreement with our result, \eqref{Eq: BatMan} predicts that $N_X(e)=O_\varepsilon(q^{e(n-4+\varepsilon)})$ when $X\subset \PP^{n-1}$ is a smooth complete intersection of two quadrics and $N_X(e)=O_\varepsilon(q^{e(n-3+\varepsilon)})$ when $X\subset \PP^{n-1}$ is a smooth cubic hypersurface.

For comparison, we know an asymptotic formula for a weighted version of $N_X(q^e)$ for any smooth hypersurface of dimension $6$ over $\FF_q(t)$ thanks to work of Browning and Vishe~\cite{CubicHypersurfacesBV}. For $n=7$, the same is known if we assume the defining equation $F$ to be diagonal, by work of Hochfilzer and the author~\cite{glas2022question}. Again assuming $F$ to be diagonal, in they same work they establish an upper bound of the form $N_F(q^e)\ll_\eps q^{e(3+\eps)}$ for $n=6$. 
All these works use the circle method and it seems that without the injection of radical new ideas, we cannot do better than $N_X(q^e)\ll q^{9e/4}$ for $n=5$ using the circle method alone. Even the  implementation of the ratios conjecture into the circle method by Wang~\cite{wang2021approaching}, which was carried over to function fields by Browning, Wang and the author~\cite{browningGlasWang2024sums}, might only be able to produce the conditional estimate $q^{e(9/4-\delta)}$ for some tiny $\delta>0$. 

If $X$ is a Fano variety defined over $\FF_q(t)$, then one can still parameterise the set of $\FF_q(t)$-rational points of bounded height by a suitable space of sections of fixed degree. It would be interesting to see whether one can also obtain dimension bounds for these spaces, as then our strategy would imply estimates for varieties that are not necessarily defined over the constant field.

\subsection*{Acknowledgements}
The author would like to thank Tim Browning, Raymond Cheng, Matthew Hase-Liu, Eric Riedl and Victor Wang for helpful comments and discussions. 
\section{Uniformly counting rational points over finite fields}
By convention, a variety is a closed subset of affine or projective space and irreducible will always mean geometrically irreducible.

Let $N\geq 1$ and $Y\subset \A^N$ be a non-empty variety defined over $\bar{\FF}_q$. 
We define the degree $\deg(Y)$ of $Y$ to be the sum of the degrees of its irreducible components. We will write $\dim(Y)$ for the maximal dimension of the irreducible components of $Y$. In particular, our definition of degree takes into account irreducible components of $Y$ of dimension smaller than $\dim(Y)$. By convention, we set $\deg(\emptyset)=0$. We extend both the degree and the dimension to locally closed subsets of affine space by defining them to be the degree and dimension of their closures respectively.


Our main technical input for Theorems~\ref{Th: MainThmdelPezzo} and \ref{Thm: RatPointsCompInt} is the following simple estimate.
\begin{proposition}\label{Prop: UniformPointCount}
    Let $Y\subset \A^N$ be a locally closed subset defined over $\bar{\FF}_q$. Then 
    \[
    \#Y(\FF_q)\leq \deg(Y)q^{\dim(Y)}.
    \]
\end{proposition}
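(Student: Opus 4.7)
The plan is to prove the bound by induction on $\dim(Y)$, using an affine form of B\'ezout's inequality to slice $Y$ with coordinate hyperplanes. Since both $\deg$ and $\dim$ are defined as the corresponding invariants of the Zariski closure $\bar{Y}$, and $\#Y(\FF_q)\leq \#\bar{Y}(\FF_q)$, I may assume from the outset that $Y$ is closed.

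A straightforward reduction shows that it suffices to treat the irreducible case. Indeed, if $Y=Y_1\cup\cdots\cup Y_k$ is the decomposition of the closed set $Y$ into irreducible components, then assuming the bound for each $Y_i$ gives
$$\#Y(\FF_q)\leq \sum_{i=1}^k \#Y_i(\FF_q)\leq \sum_{i=1}^k\deg(Y_i)q^{\dim(Y_i)}\leq q^{\dim(Y)}\sum_{i=1}^k\deg(Y_i)=\deg(Y)q^{\dim(Y)}.$$

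I now prove the irreducible case by induction on $d=\dim(Y)$. When $d=0$, $Y$ is a single geometric point of degree $1$, so the bound is trivial. For $d\geq 1$, pick a coordinate $x_i$ non-constant on the irreducible variety $Y$; then for every $a\in\FF_q$ the hyperplane $H_a=\{x_i=a\}$ does not contain $Y$, and a standard affine form of B\'ezout's inequality gives $\dim(Y\cap H_a)\leq d-1$ together with $\deg(Y\cap H_a)\leq \deg(Y)$. Partitioning
$$Y(\FF_q)=\bigsqcup_{a\in\FF_q}(Y\cap H_a)(\FF_q)$$
and applying to each slice the reducible bound already established for dimension $d-1$ yields
$$\#Y(\FF_q)\leq \sum_{a\in\FF_q}\deg(Y\cap H_a)\,q^{d-1}\leq q\cdot\deg(Y)\,q^{d-1}=\deg(Y)q^d,$$
which completes the induction.

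The only input requiring some care is the invocation of B\'ezout's inequality: one needs a version valid for a possibly reducible closed subvariety intersected with a hyperplane, producing the bound $\deg(Y\cap H)\leq \deg(Y)$ in which the degree is understood as the sum of the degrees of \emph{all} irreducible components, matching the convention of the paper. Such a statement is classical (for instance, a consequence of Heintz's affine B\'ezout theorem). Once this is granted, the rest of the argument is entirely elementary.
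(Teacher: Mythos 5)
Your proof is correct and follows essentially the same route as the paper's: induction on the dimension, partitioning the $\FF_q$-points according to the fibres of a coordinate function $x_i=a$, and the affine B\'ezout inequality $\deg(Y\cap H)\leq\deg(Y)\deg(H)$ (the paper's Lemma~\ref{Le: BezoutHyper}) to control the degrees of the slices. The one structural difference is the reduction: you first decompose into irreducible components and run the induction for irreducible $Y$, whereas the paper splits $Y$ into the union $Z_1$ of its components of non-maximal dimension and the equidimensional top part $Z_2$, and slices $Z_2$ directly. Your variant is in fact slightly more robust on one point: for irreducible $Y$ of positive dimension some coordinate is automatically non-constant, so every $H_a$ meets $Y$ properly, while for a reducible equidimensional $Z_2$ a single coordinate non-constant on every component need not exist (consider $V(x_1)\cup V(x_2)\subset\A^2$), so the paper's choice of $i_0$ implicitly needs a component-by-component treatment or a generic linear change of coordinates --- exactly what your preliminary reduction supplies. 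Your interleaved induction (the irreducible case in dimension $d$ uses the general case in dimension $d-1$, which in turn follows from the irreducible cases in dimensions at most $d-1$) is correctly set up, and the disjointness of the slices $(Y\cap H_a)(\FF_q)$ over $a\in\FF_q$ is valid since every $\FF_q$-point has its $i$-th coordinate in $\FF_q$.
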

We do not claim that Proposition~\ref{Prop: UniformPointCount} is new. In fact, proofs have appeared in~\cite{FiniteFieldCount}, \cite{EllenbergOberlinTaoKakeya}, \cite{DvirKollarVarietyEvasisve} and \cite{browningShuntaro2024rational}. To be self-contained, we will present a full proof. 

Our proof of Proposition~\ref{Prop: UniformPointCount} uses an induction procedure, based on the following version of B\'ezout's inequality. A proof can for example be found in~\cite[Theorem 1]{HeintzDefinability}.
\begin{lemma}\label{Le: BezoutHyper}
Let $Y\subset \A^N$ be a hypersurface and $Z\subset \A^N$ a variety. Then $\deg(Y\cap Z)\leq \deg(Y)\deg(Z)$. 
\end{lemma}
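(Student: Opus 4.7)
The plan is to reduce to the case where $Z$ is irreducible and then pass to projective closures so that the classical projective form of B\'ezout's theorem applies.

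First I would dispose of the reduction. Write $Z = Z_1 \cup \cdots \cup Z_k$ as the union of its irreducible components. If $W$ is any irreducible component of $Y \cap Z$, then, being irreducible, $W$ is contained in some $Z_i$ and hence in $Y \cap Z_i$; moreover $W$ must actually be an irreducible component of $Y \cap Z_i$, because any strictly larger irreducible subset of $Y \cap Z_i$ would still lie inside $Y \cap Z$, contradicting the maximality of $W$. This shows that the components of $Y \cap Z$ are distributed among the components of the $Y \cap Z_i$, so $\deg(Y \cap Z) \leq \sum_i \deg(Y \cap Z_i)$. Since $\sum_i \deg(Z_i) = \deg(Z)$, it suffices to prove the bound assuming $Z$ irreducible.

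Next, when $Z$ is irreducible, I would split into two cases. If $Z \subseteq Y$ then $Y \cap Z = Z$ and the inequality is immediate from $\deg(Y) \geq 1$. Otherwise, let $f$ be the defining polynomial of $Y$, set $d \defeq \deg(Y)$ and $e \defeq \deg(Z)$, and consider the projective closures $\bar{Y} = V(F) \subset \PP^N$ (with $F$ the homogenization of $f$) and $\bar{Z} \subset \PP^N$. The degree is preserved under projective closure, so $\deg(\bar{Y}) = d$ and $\deg(\bar{Z}) = e$; moreover $Z \not\subseteq Y$ and the density of $Z$ in $\bar{Z}$ force $F$ not to vanish identically on $\bar{Z}$, so $\bar{Y} \cap \bar{Z}$ is a pure codimension one subvariety of $\bar{Z}$. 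The projective form of B\'ezout's theorem with multiplicities (for instance as stated in Fulton's intersection theory) then yields
\[
\sum_W i(\bar{Y}, \bar{Z}; W)\, \deg(W) = de,
\]
where the sum is over the irreducible components $W$ of $\bar{Y} \cap \bar{Z}$ in $\PP^N$ and each intersection multiplicity $i(\bar{Y}, \bar{Z}; W)$ is a positive integer. Dropping the multiplicities yields $\sum_W \deg(W) \leq de$.

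To finish, I would observe that each irreducible component of the affine intersection $Y \cap Z$ is the affine trace of a unique irreducible component $W$ of $\bar{Y} \cap \bar{Z}$ not contained in the hyperplane at infinity, and has the same degree as $W$. Thus $\deg(Y \cap Z)$ is bounded by a partial sum of the $\deg(W)$, hence by $de = \deg(Y)\deg(Z)$. The only real input beyond elementary bookkeeping is the projective B\'ezout identity with intersection multiplicities; this is the main obstacle in the sense that it has to be cited as a black box, but as an alternative one can derive it by computing Hilbert polynomials, using the fact that $F$ is a non-zero-divisor on the homogeneous coordinate ring of the irreducible variety $\bar{Z}$ and that the Koszul complex of $F$ is exact in this setting.
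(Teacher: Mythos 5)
Your proof is correct, but it takes a different route from the paper, which does not argue the lemma at all: it simply cites \cite[Theorem 1]{HeintzDefinability} as a black box (Heintz's proof of that statement is itself an affine Bézout inequality proved by elimination-theoretic/inductive means). Your argument is a self-contained reduction: split $Z$ into irreducible components (noting that every component of $Y\cap Z$ is a component of some $Y\cap Z_i$), dispose of the case $Z_i\subseteq Y$, and otherwise pass to projective closures and apply the refined projective Bézout equality $\sum_W i(\bar Y,\bar Z;W)\deg(W)=\deg(Y)\deg(Z)$ for a hypersurface section of an irreducible variety, discarding the positive multiplicities and the components at infinity. The steps are all sound: in the irreducible case every component of $\bar Y\cap\bar Z$ has codimension one in $\bar Z$ by Krull, so the paper's convention that $\deg$ counts \emph{all} components (including lower-dimensional ones) causes no trouble, and the closure of each affine component of $Y\cap Z$ is indeed a component of $\bar Y\cap\bar Z$ of the same degree because $\bar Y\cap\A^N=Y$ and $\bar Z\cap\A^N=Z$; the empty-intersection case is covered by $\deg(\emptyset)=0$. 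What the two treatments buy is mostly a matter of economy: the paper's citation keeps its exposition minimal, while your argument makes the inequality self-contained modulo one standard input (Fulton-style intersection with a Cartier divisor, or equivalently the Hilbert-polynomial computation you sketch via the regularity of $F$ on the coordinate ring of $\bar Z$), at the cost of invoking projective intersection theory that the paper deliberately avoids spelling out.
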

The following lemma will also be useful at several points.
\begin{lemma}\label{Le: DeltaonOpen}
    Let $Y\subset \A^N$ be locally closed and $U\subset Y$ be open. Then $\deg(U)\leq \deg(Y)$.
\end{lemma}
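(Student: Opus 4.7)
The plan is to unpack the definitions and reduce to a statement about irreducible components.

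First, I would use the fact that $Y$ is locally closed to write $Y = \bar{Y} \cap V'$ for some open $V' \subset \A^N$, and since $U$ is open in $Y$, write $U = Y \cap V$ for some open $V \subset \A^N$. Then $U = \bar{Y} \cap (V \cap V')$, so $U$ is locally closed in $\A^N$ as well, and in particular $U$ is open inside the closed set $\bar{Y}$. By the paper's convention, $\deg(U) = \deg(\bar{U})$ and $\deg(Y) = \deg(\bar{Y})$, so I may assume at the outset that $Y$ itself is closed and $U = Y \cap V$ for some open $V \subset \A^N$.

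Next, I would decompose $Y$ into its (finitely many) irreducible components $Y_1,\dots,Y_r$, so that $\deg(Y) = \sum_{i=1}^{r}\deg(Y_i)$ by definition. Since intersection with $V$ distributes over unions, $U = \bigcup_{i=1}^{r}(Y_i \cap V)$. For each $i$, there are two possibilities: either $Y_i \cap V = \emptyset$, in which case this piece contributes nothing; or $Y_i \cap V \neq \emptyset$, in which case $Y_i \cap V$ is a nonempty open subset of the irreducible set $Y_i$, hence dense in $Y_i$, so that $\overline{Y_i \cap V} = Y_i$. Taking closures and using that the closure of a finite union is the union of the closures, I obtain
\[
\bar{U} \;=\; \bigcup_{i \,:\, Y_i \cap V \neq \emptyset} Y_i.
\]

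Finally, the $Y_i$ appearing on the right are irreducible and closed, and no one of them is contained in another (since they are the irreducible components of $Y$), so the set of $Y_i$ with $Y_i \cap V \neq \emptyset$ is precisely the set of irreducible components of $\bar{U}$. Therefore
\[
\deg(U) \;=\; \deg(\bar{U}) \;=\; \sum_{i \,:\, Y_i \cap V \neq \emptyset} \deg(Y_i) \;\leq\; \sum_{i=1}^{r} \deg(Y_i) \;=\; \deg(Y),
\]
which is the desired bound. There is no real obstacle here; the only thing to be careful about is that the paper's degree convention sums over \emph{all} irreducible components of the closure (including those of lower dimension), so no component can be dropped for dimensional reasons in passing from $Y$ to $\bar{U}$, but since the argument only discards components, this causes no trouble.
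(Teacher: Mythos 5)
Your proof is correct and follows essentially the same route as the paper: the paper's one-line argument is exactly the observation that every irreducible component of $\bar{U}$ is an irreducible component of $\bar{Y}$, which you justify in detail via the decomposition into irreducible components and the density of a nonempty open subset of an irreducible set. Your write-up simply fills in the details that the paper leaves implicit.
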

\begin{proof}
    This follows from the observation that every irreducible component of the closure of $U$ is also an irreducible component of the closure of $Y$ and that the degree of a locally closed subset is the sum of the degrees of the irreducible components of its closure.
\end{proof}
\begin{proof}[Proof of Proposition~\ref{Prop: UniformPointCount}]
    As both the dimension and the degree of $Y$ and its closure agree and the number of $\FF_q$-points of $Y$ is certainly at most the number of $\FF_q$-points of its closure, we may without loss of generality assume that $Y$ is closed.
    
    We proceed via an induction on $\dim(Y)$. If $\dim(Y)=0$, then $Y(\FF_q)$ is a union of at most $\deg(Y)=\deg(Y)$ points, so that the statement holds trivially.

    Let us suppose now that $\dim(Y)>0$ and write $Y=Z_1 \cup Z_2$, where $Z_1$ is the union of the irreducible components of $Y$ of dimension strictly smaller than $\dim(Y)$ and $Z_2$ is the union of irreducible components of $Y$ of dimension $\dim(Y)$. Note that in particular $\deg(Y)=\deg(Z_1)+\deg(Z_2)$, which follows directly from the definition of the degree. Then by the induction hypothesis we have $\#Z_1(\FF_q)\leq \deg(Z_1)q^{\dim(Z_1)}\leq \deg(Z_1)q^{\dim(Y)}$. 

    As we have $\dim(Z_2)>0$, we can find an index $1\leq i_0\leq N$ such that the hyperplane $H_a\subset \A^N$ defined by $x_{i_0}=a$ intersects $Z_2$ properly for any $a\in\bar{\FF}_q$. Thus 
    \begin{align*}
        \#Z_2(\FF_q) &\leq\sum_{a\in \FF_q}\#(Z_2\cap H_a)(\FF_q)\\
        &\leq \sum_{a\in \FF_q} \deg(Z_2\cap H_a)q^{\dim(Z_2\cap H_a)}\\
        &\leq \deg(Z_2)\sum_{a\in \FF_q}\deg(H_a)q^{\dim(Z_2)-1}\\
        &\leq \deg(Z_2)q^{\dim(Y)},
    \end{align*}
    where second inequality follows from the induction hypothesis; the third from Lemma~\ref{Le: BezoutHyper} and $\dim(Z_2\cap H_a)<\dim(Z_2)$; and the fourth from the fact that $\deg(H_a)=1$ for any $a\in \FF_q$ and $\dim(Z_2)=\dim(Y)$. Therefore, we have 
    \[
    \#Y(\FF_q)\leq \#Z_1(\FF_q)+\#Z_2(\FF_q)\leq (\deg(Z_1)+\deg(Z_2))q^{\dim(Y)}=\deg(Y)q^{\dim(Y)}.
    \]
\end{proof}
\section{Counting $\FF_q$-points on moduli spaces of rational curves}
Our goal is now to apply Proposition~\ref{Prop: UniformPointCount} to prove Theorems~\ref{Th: MainThmdelPezzo} and \ref{Thm: RatPointsCompInt}. We will begin with the latter. 

Suppose that $X\subset \PP^{n-1}$ is defined as the common vanishing locus of forms $F_1,\dots, F_R\in \FF_q[x_1,\dots, x_n]$ of degrees $d_1,\dots, d_R$. Let $\bm{x}(t)\in\FF_q[t]^n$ be such that $\max_{i=1,\dots, n}\deg(x_i)=e$. Then we can write 
\[
F_i(\bm{x}(t))=\sum_{j=0}^{d_ie}f_{ij}(\bm{x})t^j
\]
for $i=1,\dots, R$, where each $f_{ij}$ is a form over $\FF_q$ of degree $e$ in the coefficients of the coordinates of $\bm{x}$. Identifying $\bm{x}$ with its coefficient vector, the equations $f_{ij}=0$ for $1\leq i \leq R$ and $0\leq j\leq d_ie$ define a subvariety $Y_e$ inside $\PP^{n(e+1)-1}$. The space of morphisms $\Mor_e(\PP^1,X)$ is then the intersection of an open subset $U_e\subset \PP^{n(e+1)-1}$ with $Y_e$. The open set $U_e$ captures that we require $\max_{i=1,\dots, n}\deg(x_i)=e$ and that $\gcd(x_1(t),\dots, x_n(t))=1$ in $\FF_q[t]$, which can be seen to be a Zariski open condition using resultants. 

\begin{proposition}\label{Prop: GeneralCount}
    In this setting, suppose that $Z\subset \Mor_e(\PP^1,X)$ is a union of irreducible components of $\Mor_e(\PP^1,X)$. Then for 
    \[
    C=\prod_{i=1}^R d_i^{d_i},
    \]
    we have
    \[
    \#Z(\FF_q) \leq 2d_1\cdots d_R C^e q^{\dim(Z)}.
    \]
\end{proposition}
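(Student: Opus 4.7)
The plan is to combine Proposition~\ref{Prop: UniformPointCount} with iterated B\'ezout (Lemma~\ref{Le: BezoutHyper}), after lifting the projective picture to the affine cone so that Proposition~\ref{Prop: UniformPointCount} applies directly.

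First I would replace $Z \subset \PP^{n(e+1)-1}$ by its affine cone $\hat Z \subset \A^{n(e+1)}$. Since $\hat Z$ is a cone over $Z$, one has $\dim \hat Z = \dim Z + 1$ and $\deg \hat Z = \deg Z$. Proposition~\ref{Prop: UniformPointCount} yields $\#\hat Z(\FF_q) \le \deg(Z)\,q^{\dim Z + 1}$, while every $\FF_q$-point of $Z$ lifts to exactly $q-1$ nonzero $\FF_q$-points of $\hat Z$, so
\[
\#Z(\FF_q) \;\le\; \frac{\#\hat Z(\FF_q)}{q-1} \;\le\; \frac{q}{q-1}\,\deg(Z)\,q^{\dim Z} \;\le\; 2\deg(Z)\,q^{\dim Z},
\]
using $q \ge 2$. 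It therefore suffices to prove $\deg(Z) \le d_1 \cdots d_R \cdot C^e$.

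Next I would bound $\deg(Z)$ in two steps. Since $Z$ is a union of irreducible components of the open subscheme $\Mor_e(\PP^1,X) = U_e \cap Y_e \subset Y_e$, its closure in $\PP^{n(e+1)-1}$ is a union of irreducible components of $Y_e$, hence $\deg(Z) \le \deg(Y_e)$ (by the same observation as in Lemma~\ref{Le: DeltaonOpen}). The variety $Y_e$ is cut out in the ambient space by the equations $f_{ij} = 0$ for $1\le i\le R$ and $0 \le j \le d_i e$; each $f_{ij}$ is homogeneous of degree $d_i$ in the $n(e+1)$ coefficient variables, so the corresponding hypersurface has degree $d_i$. Starting from $\A^{n(e+1)}$ (of degree $1$) and intersecting with each of the $\sum_{i=1}^R (d_i e + 1)$ hypersurfaces in turn, iterated application of Lemma~\ref{Le: BezoutHyper} gives
\[
\deg(Y_e) \;\le\; \prod_{i=1}^R d_i^{d_i e + 1} \;=\; d_1 \cdots d_R \cdot \left(\prod_{i=1}^R d_i^{d_i}\right)^{\!e} \;=\; d_1 \cdots d_R \cdot C^e.
\]
Combining with the previous display yields the claimed bound.

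I do not anticipate a real obstacle here; the argument is a direct application of the tools assembled in Section~2. The two pieces of overhead in the final constant have transparent origins: the factor $2$ comes from passing between projective and affine counts, and the factor $d_1 \cdots d_R$ comes from the ``$+1$'' in the exponent $d_i e + 1$, which in turn reflects that each form $F_i$ produces $d_i e + 1$ coefficient equations (one per power of $t$) rather than $d_i e$.
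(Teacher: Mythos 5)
Your argument is correct and is essentially the paper's own proof: pass to the affine cone, compare $\#Z(\FF_q)$ with $\#\hat Z(\FF_q)/(q-1)\le 2\#\hat Z(\FF_q)/q$, bound the degree by that of $Y_e$ via the open-subset/irreducible-component observation, and apply iterated B\'ezout to the $\sum_i(d_ie+1)$ coefficient equations of degrees $d_i$ to get $\deg \le \prod_i d_i^{d_ie+1}$. The only cosmetic difference is that you invoke $\deg\hat Z=\deg Z$ explicitly, whereas the paper works with degrees of affine cones throughout; this changes nothing.
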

\begin{proof}
    For a projective variety $Y$, let $\tilde{Y}$ be its affine cone. Let $X_e=\Mor_e(\PP^1,X)$. As every irreducible component of $\tilde{Z}$ is an irreducible component of $\tilde{X}_e$, it is clear from the definition of the degree that $\deg(\tilde{Z})\leq \deg(\tilde{X}_e)$. As $X_e$ is an open subset of $Y_e$, the same applies to their affine cones. Therefore, from Lemmas~\ref{Le: DeltaonOpen} and ~\ref{Le: BezoutHyper} we obtain 
    \[
    \deg(\tilde{Z})\leq \deg(\tilde{X}_e)\leq \deg(\tilde{Y}_e)\leq \prod_{i=1}^R\prod_{j=0}^{d_ie}d_i=\prod_{i=1}^Rd_i^{d_ie+1}.
    \]
    As the number of $\FF_q$-points on $Z$ is certainly at most $1/(q-1)\leq 2/q$ times the number of $\FF_q$-points on its affine cone and $\dim(\tilde{Z})=\dim(Z)+1$, we thus conclude from Proposition~\ref{Prop: UniformPointCount} that 
    \begin{align*}
    \#Z(\FF_q)\leq 2\#\tilde{Z}(\FF_q)/q 
    \leq 2\deg(\tilde{Z})q^{\dim(Z)}
    \leq 2\left(\prod_{i=1}^Rd_i^{d_ie+1}\right)q^{\dim(Z)}.        
    \end{align*}
\end{proof}
\begin{proof}[Proof of Theorem~\ref{Thm: RatPointsCompInt}]
Let $Z$ be the closure of $\Mor_{e,U}(\PP^1,X)$ inside $\Mor_e(\PP^1,X)$. Then Proposition~\ref{Prop: GeneralCount} gives
    \begin{align*}
        N_U(e)& = \#\Mor_{e,U}(\PP^1, X)(\FF_q)\\
        &\ll C^eq^{\dim\Mor_{e,U}(\PP^1,X)}
    \end{align*}
    where the implied constant only depends on $d_1,\dots, d_R$ and $q$. This completes the proof of Theorem~\ref{Thm: RatPointsCompInt}.
\end{proof}

Next we turn to del Pezzo surfaces. Let $X$ be a del Pezzo surface. Our main ingredient in is the following result of Beheshti, Lehmann, Riedl and Tanimoto~\cite[Theorem 1.1]{BeheshtiLehmannRiedlTanimoto.dP}. 
\begin{theorem}\label{Th: Sho}
    Let $X$ be a smooth del Pezzo over $\FF_q$. Then for any $e\geq 1$, the only irreducible components of $\Mor(\PP^1,X,e)$ that are not of the expected dimension are those parameterising multiple covers of rational curves $C\subset X$ such that $-K_X\cdot C \leq 1$.
\end{theorem}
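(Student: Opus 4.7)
The plan is to stratify $\Mor(\PP^1,X,e)$ according to the degree $m$ with which a general morphism covers its image curve, and to treat the cases $m=1$ (birational onto image) and $m\geq 2$ (multiple covers) separately. For an irreducible component $\mathcal{Z}$ whose general member $f$ factors as $\PP^1\xrightarrow{\varphi}\tilde{C}\to C\subset X$ with $\deg\varphi=m$ and $\tilde{C}$ the normalization of $C$, standard moduli-theoretic arguments give
\[
\dim\mathcal{Z}\leq \dim_{[C]}\mathrm{Hilb}(X)+\dim\Mor_m(\PP^1,\tilde{C}).
\]
Since $\tilde{C}$ is dominated by $\PP^1$ it is itself rational, so $\dim\Mor_m(\PP^1,\tilde{C})=2m+1$.

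For the multiple-cover case $m\geq 2$, I would reduce to $C$ smooth rational by passing to the normalization and then use adjunction: a smooth rational $C\subset X$ with $-K_X\cdot C=e'$ satisfies $C^2=e'-2$, hence $\dim_{[C]}\mathrm{Hilb}(X)\leq h^0(N_{C/X})=\max(e'-1,0)$. Substituting, one finds $\dim\mathcal{Z}\leq e'+2m$ when $e'\geq 2$, which is at most the expected dimension $me'+2$ because $(m-1)(e'-2)\geq 0$. When $e'=1$ the curve $C$ is a $(-1)$-curve and rigid, so the component has dimension exactly $2m+1$, exceeding the expected value $m+2$ by $m-1$; these are precisely the excess components allowed by the statement.

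The subtle case is $m=1$: here deformation theory gives $\dim_{[f]}\Mor(\PP^1,X,e)=e+2+h^1(\PP^1,f^*\mathcal{T}_X)$ at smooth points, so it suffices to show that every component of $\Mor(\PP^1,X,e)$ whose general member is birational onto its image has a general member that is free, i.e.\ the splitting $f^*\mathcal{T}_X=\OK(a_1)\oplus\OK(a_2)$ satisfies $a_1\geq 0$. The strategy is to assume $a_1<0$ for a general $f$ and attempt a bend-and-break argument: the negative summand yields a one-parameter family of deformations with a fixed point, forcing $f$ to break into a non-trivial union of rational curves. Iterating and keeping track of degrees should concentrate the limit curves along $(-1)$-curves, and a case analysis using the (finite) configuration of $(-1)$-curves on each smooth del Pezzo should rule out the existence of such a non-free birational component unless $\mathcal{Z}$ was actually a multiple-cover component from the outset.

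The principal obstacle is carrying out the $m=1$ analysis in positive characteristic, where both bend-and-break and the generic-smoothness step of deformation theory can fail due to inseparable morphisms and wild ramification; controlling these pathologies on each deformation type of smooth del Pezzo surface (and especially in small characteristic, where the geometry of lines on degree $1$ and $2$ surfaces is most delicate) is where I expect the bulk of the work to lie, and is precisely the content of the proof in Beheshti--Lehmann--Riedl--Tanimoto~\cite{BeheshtiLehmannRiedlTanimoto.dP}.
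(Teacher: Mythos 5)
This theorem is not proved in the paper at all: it is quoted verbatim from \cite[Theorem 1.1]{BeheshtiLehmannRiedlTanimoto.dP}, and the only argument the paper supplies is the subsequent remark explaining why the hypotheses there (weak del Pezzo surfaces with smooth general anticanonical member, stable maps rather than morphism spaces) specialize to the statement above. So the comparison is really between your sketch and the BLRT argument. The multiple-cover half of your sketch has the standard shape, but contains a technical slip: the image curve $C$ of a general member of a component need not be smooth, so you cannot "reduce to $C$ smooth by passing to the normalization" and then compute $h^0(\NN_{C/X})$ as if $C$ were a smooth rational curve embedded in $X$ --- the normalization is an abstract $\PP^1$, not an embedded one, and for a singular image the normal sheaf computation breaks down. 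The correct count instead bounds the family of \emph{birational} rational curves of anticanonical degree $e'$ by $e'-1$ (the expected dimension $e'+2$ minus $\dim\PGL_2=3$), which presupposes the birational case in degree $e'$; the multiple-cover analysis is therefore a bootstrap on the $m=1$ case rather than independent of it.

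And that is the real gap: the $m=1$ case is the entire content of the theorem, and your treatment of it is a strategy statement, not a proof. "Assume $a_1<0$, attempt bend-and-break, and a case analysis of the $(-1)$-curves should rule this out" does not execute any of the steps where the difficulty actually sits --- in particular, an excess-dimensional birational component need not consist of smooth points of $\Mor(\PP^1,X,e)$, bend-and-break only produces degenerate limits in a compactification (so one must control the boundary of the space of stable maps, not just of $\Mor$), and in positive characteristic the general member of a component can be inseparable onto its image, which is where the degree-$1$ and degree-$2$ surfaces in small characteristic become genuinely delicate. You correctly identify where the work lies and correctly attribute it, but deferring that step to \cite{BeheshtiLehmannRiedlTanimoto.dP} means your proposal establishes nothing beyond what the citation already gives --- which, to be fair, is exactly what the paper itself does. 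If you want to add value over the paper's treatment, the self-contained pieces worth writing out are the multiple-cover dimension count done correctly (via the $e'-1$ bound on families of birational curves) and the deduction of the $\Mor(\PP^1,X,e)$ statement from the stable-maps statement, which is the content of the paper's remark following the theorem.
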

Recall that here the $e$ in $\Mor(\PP^1,X,e)$ now refers to the anti-canonical degree as opposed to our more general setting above, where in $\Mor_e(\PP^1,X)$ it referred to the degree under the given embedding into $\PP^n$.  
\begin{remark}
In fact, Theorem 1.1 of~\cite{BeheshtiLehmannRiedlTanimoto.dP} is stated more generally for weak del Pezzo surfaces, which additionally requires the assumption that a general member of $|-K_X|$ is smooth. This is always satisfied for smooth del Pezzo surfaces, as is explained in~\cite[Section 3]{BeheshtiLehmannRiedlTanimoto.dP}. In addition, they do not work with $\Mor(\PP^1,X,e)$ but instead with the space of stable maps generically parameterising morphisms with irreducible domain inside the coarse moduli space of the Kontsevich space, which immediately implies the same result for the morphism space.    
\end{remark}
Let $U\subset X$ be the Zariski open subset defined as the complement of all rational curves $C\subset X$ satisfying $-K_X\cdot C \leq 1$, so that rational points $x\in U(\FF_q(t))$ satisfying $H(x)=q^e$ are in bijection with the $\FF_q$-points of $\Mor_U(\PP^1,X,e)$. 

First, let us assume that $3\leq d\leq 5$. In this case the anti-canonical divisor induces an embedding $\varphi\colon X\to \PP^d$. If $d=5$, then $X$ can be realised as an intersection of 5 quadrics; if $d=4$ the image is a complete intersection of 2 quadrics and if $d=3$ it is a cubic surface inside $\PP^3$.

Let $F_1,\dots, F_{R_d}\in \FF_q[x_1,\dots,x_{d+1}]$ be the defining equation for $X$ in $\PP^d$. As $\varphi^*(\OK_{\PP^d}(1))=\omega_X^{-1}$, in the notation of the introduction we thus have $\Mor_e(\PP^1,X)=\Mor(\PP^1,X,e)$. Let $Z$ be the closure of $\Mor_U(\PP^1,X,e)$ inside $\Mor_e(\PP^1,X)$. Then Theorem~\ref{Th: Sho} guarantees that every irreducible component of $Z$ has dimension $e+2$. In particular, it follows from Proposition~\ref{Prop: GeneralCount} that 
\[
\# Z(\FF_q) \ll C_d^e q^e,
\]
with 
\[
C_d=\begin{cases}
    1024=4^5&\text{if }d=5,\\
    16=4^2 &\text{if }d=4,\\
    27=3^3 &\text{if }d=3.
\end{cases}
\]
Therefore, we have 
\[
N_U(q^e)\leq \sum_{k=0}^e \#\Mor_U(\PP^1,X,k) \ll \sum_{k=0}^e (C_d q)^k \ll (C_d q)^e,
\]
which completes the proof of Theorem~\ref{Th: MainThmdelPezzo} for $3\leq d \leq 5$. 

It will be convenient in what follows to write $\PP(e_1^{f_1},\dots, e_r^{f_r})$ to denote the weighted projective space $\PP(e_1,\dots, e_1, \dots, e_r,\dots, e_r)$, where each $e_i$ appears precisely $f_i$-times. In addition, if $Y\subset \PP(e_1^{f_1},\dots, e_r^{f_r})$ is locally closed, then we we denote by $\tilde{Y}\subset \A^{f_1+\cdots +f_r}$ its affine cone. For example, if $Y$ is defined by some equations $G_1=\cdots = G_s=0$, then $\tilde{Y}$ is the variety inside $\A^{e_1+\cdots +e_r}$ defined by the same equations. 

Next, let us assume that $d=2$. In this case, we can realise $X$ as quartic hypersurface inside $\PP(2,1,1,1)$ that is defined by an equation of the shape $F(y,u,v,w)=0$, where $F(y,u,v,w)=y^2-f(u,v,w)$ and $f\in \FF_q[u,v,w]$ is a binary quartic form.

Let $(y,u,v,w)\in \FF_q[t]^4$, where $\max\{\deg y/2, \deg u, \deg v, \deg w\}=e$ and write 
\[
F(y,u,v,w)=\sum_{i=0}^{4e}f_i(y,u,v,w)t^i,
\]
where each $f_i$ is a polynomial in the coefficients of $y,u,v,w$ with coefficients in $\FF_q$ that is homogeneous of degree 2 in the coefficient vector of $y$ and homogeneous of degree 4 in the coefficient vector of $(u,v,w)$. With this description, we can realise $X_e=\Mor(\PP^1,X,e)$ again as an open subscheme of $Y_e\subset \PP(2^{e+1}, 1^{e+1}, 1^{e+1}, 1^{e+1})$, which is defined as the common zero locus of $f_0=\cdots = f_{4e}=0$.  Let $Z$ be the closure of $\Mor_U(\PP^1,X,e)$ inside $X_e$. Then as every irreducible component of $\tilde{Z}$ is an irreducible component of $\tilde{X}_e$, we have $\deg(\tilde{Z})\leq \deg(\tilde{X}_e)$. In addition, $\tilde{X}_e$ is an open subset of $\tilde{Y}_e$, so that by Lemmas~\ref{Le: DeltaonOpen} and \ref{Le: BezoutHyper} we have 
\[
\deg(\tilde{Z})\leq \deg(\tilde{X}_e)\leq \deg(\tilde{Y}_e)\leq \prod_{i=0}^{4e}\deg(f_i)=4^{4e+1}.
\]

Theorem~\ref{Th: Sho} guarantees that $\dim(\tilde{Z})=\dim(Z)+1=e+3$. As $\#Z(\FF_q)\leq \# \tilde{Z}(\FF_q)$, Proposition~\ref{Prop: UniformPointCount} hands us the estimate 
\[
\#\Mor_U(\PP^1,X, e) \leq \#\tilde{Z}(\FF_q) \leq \deg(\tilde{Z})q^{\dim(\tilde{Z})}\leq 4^{4e+1}q^{e+3}.
\]
Hence we obtain 
\begin{align*}
    N_U(q^e)\leq \sum_{k=0}^e \#\Mor_U(\PP^1,X,k) \ll \sum_{k=0}^e (C_2q)^k\ll (C_2q)^e,
\end{align*}
where $C_2=4^4$. This proves Theorem~\ref{Th: MainThmdelPezzo} when $d=2$. 

Finally, we deal with the case $d=1$. In this case $X$ can be realised as a sextic hypersurface inside $\PP(3,2,1,1)$. A similar argument as for $d=2$ shows that we can realise $X_e=\Mor (\PP^1,X,e)$ as an open subscheme of $\PP(3^{3e+1}, 2^{2e+1}, 1^{e+1}, 1^{e+1})$. Moreover, it is contained as an open subscheme of a variety $Y_e$ defined as the common vanishing locus of polynomials $f_0,\dots, f_{6e}$ in $7e+4$ variables that are homogeneous of degree $2$ in the first $3e+1$ variables; homogeneous of degree $3$ in the next $2e+1$ variables and homogeneous of degree 6 in the remaining variables. Let $Z\subset \PP(3^{3e+1}, 2^{2e+1}, 1^{e+1}, 1^{e+1})$ be the closure of $\Mor_U(\PP^1,X,e)$ inside $X_e$. Then as every irreducible component of $\tilde{Z}$ is an irreducible component of $\tilde{X}_e$, we have $\deg(\tilde{Z})\leq \deg(\tilde{X}_e)$. In addition, $\tilde{X}_e$ is open in $\tilde{Y}_e$, so that Lemma~\ref{Le: DeltaonOpen} yields
\[
\deg(\tilde{Z})\leq \deg(\tilde{X}_e) \leq \deg(\tilde{Y}_e)\leq \prod_{i=0}^{6e}\deg(f_i)= 6^{6e+1},
\]
where the penultimate inequality is a consequence of Lemma~\ref{Le: BezoutHyper}. By Theorem~\ref{Th: Sho}, we have $\dim(\tilde{Z})=\dim(Z)+1=e+3$. Therefore, Proposition~\ref{Prop: UniformPointCount} implies
\[
\#\Mor_U(\PP^1,X,e)(\FF_q)\leq \#\tilde{Z}(\FF_q) \leq \deg(\tilde{Z})q^{\dim(\tilde{Z})}\leq 6^{6e+1}q^{e+3}.
\]
Thus we obtain 
\begin{align*}
N_U(q^e)\leq \sum_{k=0}^e\#\Mor_U(\PP^1,X,k)(\FF_q) \ll \sum_{k=0}^e (C_1q)^k\ll (C_1q)^e,
\end{align*}
where $C_1=6^6$. This finishes the case $d=1$ of Theorem~\ref{Th: MainThmdelPezzo} and thus completes its proof.
\begin{remark}
    Alternatively, we could have avoided working with weighted projective space by using a very ample multiple of $-K_X$ to obtain an embedding into projective space.
\end{remark}
\section{Fano scheme of lines}\label{Se: Linescubic}
Throughout this section $K$ denotes an algebraically closed field of any characteristic, unless specified otherwise. Given a variety $X\subset \PP^n$ over $K$, we let $F_1(X)=\{L\in \mathbb{G}(1,n)\colon L\subset X\}$ be the \emph{Fano scheme of lines} of $X$ and for $x\in X$ we denote by $F_1(X,x)\subset F_1(X)$ the subscheme parameterising lines $L\subset X$ containing $x$. When $X\subset \PP^n$ is a cubic hypersurface with $n\geq 3$, we know from work of Altman and Kleiman~\cite{AltKleinFano} that $F_1(X)$ is smooth of dimension $2n-4$.

We begin with the following result about lines on smooth intersections of two quadrics.    
\begin{proposition}\label{Prop: LinesIntersectionQuadrics}
    Let $X=V(Q_1,Q_2)\subset \PP^{n}$ be a smooth intersection of two quadrics. If $n\geq 5$, then $\dim F_1(X,x)= n-5$ for every $x\in X$. 
\end{proposition}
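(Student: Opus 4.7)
The plan is to compute $F_1(X,x)$ as the zero locus of two explicit quadrics in a linear subspace of the tangent directions at $x$ and then use the smoothness of $X$ to force the expected dimension.

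After choosing coordinates with $x=[1:0:\cdots:0]$, I would write $Q_i=x_0L_i+R_i$ with $L_i$ linear and $R_i$ quadratic in $x_1,\ldots,x_n$; smoothness of $X$ at $x$ forces $L_1$ and $L_2$ to be linearly independent. A direction $[v]\in\PP^{n-1}$ spans a line through $x$ lying in $X$ iff $L_i(v)=R_i(v)=0$ for $i=1,2$, so $F_1(X,x)$ is cut out by the restrictions $\bar R_1,\bar R_2$ in $V(L_1,L_2)\cong\PP^{n-3}$. Krull's Hauptidealsatz immediately gives $\dim F_1(X,x)\ge n-5$.

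For the matching upper bound I would argue by contradiction: if some component of $F_1(X,x)$ had dimension at least $n-4$, it would be a hypersurface component of both $V(\bar R_1)$ and $V(\bar R_2)$ in $\PP^{n-3}$, which an elementary analysis shows forces either (a) some non-trivial combination $s\bar R_1+t\bar R_2$ to vanish identically on $\PP^{n-3}$, or (b) $\bar R_1$ and $\bar R_2$ to share a common linear factor. Case (a) lifts to a pencil member $Q=sQ_1+tQ_2$ containing $T_xX\cong\PP^{n-2}$; smoothness of $X$ forces the discriminant of the pencil to have only simple roots, so every member has rank at least $n$, and such a quadric in $\PP^n$ contains no linear subspace of dimension exceeding $\lfloor n/2\rfloor<n-2$ for $n\ge 5$, a contradiction.

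In case (b), taking the cone from $x$ over the common linear zero locus produces a linear subspace $\Lambda\cong\PP^{n-3}$ with $x\in\Lambda\subset X$. Since $X$ is smooth, the normal bundle exact sequence $0\to N_{\Lambda/X}\to N_{\Lambda/\PP^n}\to N_{X/\PP^n}|_\Lambda\to 0$ on $\Lambda$ must read $0\to\OK_\Lambda(-1)\to\OK_\Lambda(1)^{\oplus 3}\to\OK_\Lambda(2)^{\oplus 2}\to 0$, the identification $N_{\Lambda/X}\cong\OK_\Lambda(-1)$ coming from a first Chern class computation. Taking Euler characteristics on $\PP^{n-3}$ yields the alternating sum $0-3(n-2)+(n-1)(n-2)=(n-2)(n-4)$, which is nonzero for $n\ge 5$ and contradicts exactness. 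I expect case (b) to be the main obstacle, and the identity $(n-2)(n-4)\neq 0$ is exactly what pins down $n\ge 5$ as the correct hypothesis, since for $n=4$ the obstruction vanishes and del Pezzo surfaces of degree $4$ genuinely contain lines.
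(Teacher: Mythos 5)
Your reduction of $F_1(X,x)$ to the pair of quadrics $\bar R_1,\bar R_2$ on $T_xX\cap\PP^{n-1}\cong\PP^{n-3}$, and the observation that failure of the expected dimension forces either a common linear factor or a vanishing member of the pencil $s\bar R_1+t\bar R_2$, is exactly the paper's setup, and your case analysis is complete. Where you genuinely diverge is in how the two degenerate cases are excluded. The paper kills both at once with the Lefschetz hyperplane theorem: any irreducible $Z\subset X$ with $\dim Z>\dim(X)/2$ has degree divisible by $\deg X=4$, which rules out both the linear $\PP^{n-3}$ (degree $1$) and the $(n-3)$-dimensional quadric (degree at most $2$) that the degenerations produce, in any characteristic. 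Your case (b) instead uses the normal bundle sequence and the Euler characteristic identity $\chi(\OK_\Lambda(-1))-3\chi(\OK_\Lambda(1))+2\chi(\OK_\Lambda(2))=(n-2)(n-4)\neq 0$ for $n\geq 5$; this is correct, characteristic-free, and a perfectly good self-contained alternative, and your closing remark that the obstruction vanishes precisely at $n=4$ is a nice sanity check.

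The one real caveat is case (a). The chain ``$X$ smooth $\Rightarrow$ the discriminant of the pencil has simple roots $\Rightarrow$ every member has rank at least $n$'' is the classical theory of pencils of quadrics in characteristic $\neq 2$; in characteristic $2$ the polar form is alternating and the symmetric-matrix discriminant degenerates, so this step does not transfer as written. Since the proposition is invoked with no characteristic hypothesis for intersections of two quadrics, you should patch this case uniformly: if some member $Q$ of the pencil vanishes on $T_xX$, then on $T_xX\cong\PP^{n-2}$ the ideal of $X$ is generated by a single further member $Q'$, so $X$ contains an irreducible subvariety of dimension $n-3>\dim(X)/2$ and degree at most $2$ (or, if $Q'$ also vanishes there, $X=T_xX$, which is absurd), and the Lefschetz degree-divisibility argument finishes the job in every characteristic.
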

\begin{proof}
    After a suitable change of variables we may assume that $x=(1\colon 0 \colon \cdots \colon 0)$ and that the embedded tangent space $T_xX\subset \PP^n$ is given by $x_2=x_3=0$. This implies that we can take $Q_1$ and $Q_2$ to be of the shape 
    \[
    Q_1(\bm{x})= x_0x_1+q_1(x_1,\dots, x_n) \quad\text{and}\quad Q_2(\bm{x})=x_0x_2+q_2(x_1,\dots, x_n)
    \]
    for some quadratic forms $q_1,q_2\in K[x_1,\dots, x_n]$. A line through $x$ is then of the form $\ell = \{(s\colon a_1t\colon \cdots \colon a_n)\in \PP^n\colon (s\colon t)\in \PP^1\}$, for $(a_1\colon \cdots \colon a_n)\in \PP^{n-1}$. The Fano scheme of lines is now given by $F_1(X,x)= V(a_1,a_2, q_1(a_1,\dots, a_n),q_2(a_1,\dots, a_n))$. In particular, it will be of dimension $n-5$ unless $q_1(0,0,a_3,\dots, a_n)$ and $q_2(0,0,a_3,\dots, a_n)$ share a common factor. If they share a linear factor, say $L$, then $X$ contains the linear space defined by $x_1=x_2=L=0$, which has dimension at least $n-3$. However, the Lefschetz hyperplane theorem (which is also valid in positive characteristic when using \'etale cohomology) implies that the degree of any irreducible subvariety $Z\subset X$ with $\dim(Z)>\dim(X)/2$ is divisible by 4. We have $n-3>\dim(X)/2=(n-2)/2$ for $n\geq 5$ and so this case is impossible. Similarly, if $q_1(0,0,a_3,\dots, a_n)$ is a constant multiple of $q_2(0,0,a_3,\dots, a_n)$, then $X$ contains the $(n-3)$-dimensional quadric $V(x_1,x_2, q_1)$, which is again impossible by the Lefschetz hyperplane theorem when $n\geq 5$.   
\end{proof}
For the remainder of this section $X\subset \PP^n$ denotes a smooth cubic hypersurface over $K$ defined by a form $F\in K[x_0,\dots, x_n]$. The following result is analogous to \cite[Lemma 2.1]{CoskunStarr}. 
\begin{proposition}\label{Le: DimF1(X,x)Cubic}
    Let $X\subset \PP^n$ be a smooth cubic hypersurface and $n\geq 4$. Then there exists an open subset $U\subset X$ such that $\dim F_1(X,x)=n-4$ for any $x\in U$. In addition, if $x\not\in U$, then every irreducible component of $F_1(X,x)$ has dimension $n-3$.
\end{proposition}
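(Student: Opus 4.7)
The plan is to express $F_1(X,x)$ in explicit local coordinates and then use smoothness of $X$ to restrict the possible excess components. Choose coordinates with $x=(1:0:\cdots:0)$ and $T_xX=V(L)$ for a nonzero linear form $L\in K[x_1,\ldots,x_n]$; the defining cubic then takes the form $F=x_0^2L+x_0Q+C$ with $Q,C\in K[x_1,\ldots,x_n]$ of degrees $2,3$. A line through $x$ with direction $a=(a_1:\cdots:a_n)$ lies on $X$ if and only if $L(a)=Q(a)=C(a)=0$, so
\[
F_1(X,x)=V(Q',C')\subset V(L)\cong\PP^{n-2},
\]
where $Q',C'$ are the restrictions of $Q,C$ to $V(L)$. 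Every irreducible component has dimension at least $n-4$, and $F_1(X,x)\neq\PP^{n-2}$ since otherwise $T_xX\subset X$, impossible for a cubic.

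To produce $U$, I would use the incidence variety $I=\{(y,\ell)\in X\times F_1(X):y\in\ell\}$, a $\PP^1$-bundle over $F_1(X)$, hence of dimension $2n-5$. B\'ezout guarantees $V(Q',C')\neq\emptyset$ for every $x\in X$ whenever $n\geq 4$, so the projection $I\to X$ is surjective; upper semicontinuity of fibre dimension then yields a non-empty open $U\subset X$ on which $\dim F_1(X,x)=n-4$, and the codimension-two bound above forces $F_1(X,x)$ to be pure of dimension $n-4$ on $U$.

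For the second assertion, the key geometric input is that a smooth cubic hypersurface $X\subset\PP^n$ with $n\geq 4$ contains no linear subspace of dimension $n-2$. I would derive this from the normal bundle sequence
\[
0\to N_{\Lambda/X}\to\OK_\Lambda(1)^{\oplus 2}\to\OK_\Lambda(3)\to 0
\]
for such a $\Lambda\cong\PP^{n-2}$: the right-hand map is described by two quadrics on $\Lambda$, and for $n\geq 4$ B\'ezout forces them to share a zero on $\Lambda$, at which $X$ cannot be smooth. Granted this, suppose $V(Q',C')$ has an irreducible component $W=V(h)$ of dimension $n-3$, with $h$ an irreducible form on $\PP^{n-2}$ dividing both $Q'$ and $C'$. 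The cone over $W$ with apex $x$ is then a sub-hypersurface of $X\cap T_xX$ -- a cubic effective divisor on $T_xX\cong\PP^{n-1}$ -- of degree $\deg h$. If $\deg h=1$ the cone is itself a $(n-2)$-plane inside $X$, forbidden; if $\deg h=2$ the complementary effective divisor has degree $1$ in $T_xX$ and so is a hyperplane, again a $(n-2)$-plane inside $X$, forbidden. Therefore $\deg h=3$, which since $\deg Q'=2$ forces $Q'\equiv 0$. In this case $F_1(X,x)=V(C')$ is a cubic hypersurface in $\PP^{n-2}$; the same forbidden-plane argument rules out any linear factor of $C'$, so $C'$ is irreducible and $F_1(X,x)$ is pure of dimension $n-3$.

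The main obstacle is to verify that the divisor-theoretic argument in $T_xX$ is robust: in the cases $\deg h\geq 2$ the cone appears in $X\cap T_xX$ with multiplicity one (otherwise $2\deg h\leq 3$ would force $\deg h=1$), so the complementary divisor really has the expected degree and contributes the forbidden linear component of $X$; some care is also required to keep track of the various degenerations of the pair $(Q',C')$, especially reducible or non-reduced $Q'$, in the course of the case analysis.
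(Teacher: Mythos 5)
Your argument is correct and follows essentially the same route as the paper: write $F_1(X,x)=V(Q',C')\subset V(L)\cong\PP^{n-2}$ in local coordinates, note that any excess component is a divisor $V(h)$ with $h$ a common irreducible factor of $Q'$ and $C'$, rule out $\deg h\in\{1,2\}$ because either the cone over $V(h)$ or the residual divisor of $X\cap T_xX$ would be an $(n-2)$-plane inside $X$, and conclude that the only exceptional case is $Q'\equiv 0$, where $F_1(X,x)=V(C')$ is pure of dimension $n-3$. You substitute two ingredients, both soundly: for the fact that a smooth cubic in $\PP^n$, $n\geq 4$, contains no $(n-2)$-plane, you give a self-contained normal-bundle argument (the two quadrics defining $\OK_\Lambda(1)^{\oplus 2}\to\OK_\Lambda(3)$ must share a zero on $\Lambda\cong\PP^{n-2}$ when $n\geq4$, forcing a singular point), whereas the paper cites Starr's bound $\lfloor (n-1)/2\rfloor$ on linear spaces in smooth hypersurfaces; and you obtain the open set $U$ from semicontinuity applied to the incidence correspondence, which quietly uses $\dim F_1(X)=2n-6$ from Altman--Kleiman, whereas the paper identifies $U$ directly as the complement of the (closed) Eckardt locus $\{x: Q'\equiv 0\}$. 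Your closing caveats about multiplicities and degenerate pairs $(Q',C')$ are easily discharged: the case $C'\equiv 0$, $Q'\not\equiv 0$ also yields a forbidden $(n-2)$-plane (namely $V(L,x_0)$), and $Q'\equiv C'\equiv 0$ would give $T_xX\subset X$, which you already excluded.
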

\begin{proof}
    After a suitable change of variables we may assume that $x=(1\colon 0 \colon \cdots \colon 0)$ and that the embedded tangent space of $X$ at $x$ is given by $x_2=0$. This implies that $F$ is of the form 
    \[
    F(\bm{x})= x_0^2x_1+ x_0q(x_1,\dots, x_n)+c(x_1,\dots, x_n),
    \]
    where $q,c\in K[x_1,\dots, x_n]$ are a quadratic and a cubic form respectively. As in the proof of Proposition~\ref{Prop: LinesIntersectionQuadrics}, this implies that $F_1(X,x)$ is given by $V(x_1, q,c)\subset \PP^{n-1}$. In particular, $F_1(X,x)$ will be of dimension $n-4$ as long as $q(0,x_2,\dots, x_n)$ and $c(0,x_2,\dots, x_n)$ do not share a common factor. First, suppose $c(0,x_2,\dots, x_n)=L(x_2,\dots, x_n)q(0,x_2,\dots, x_n)$ for some linear form $L\in K[x_2,\dots, x_n]$. Then $X$ contains the $(n-2)$-dimensional linear space $V(x_0, L)$. However, since $X$ is a smooth hypersurface, by \cite[Appendix]{StarrAppendix} the maximal dimension of a linear subspace contained in it is $\lfloor (n-1)/2\rfloor$, which is strictly smaller than $n-2$ for $n\geq 4$. If $c(0,x_2,\dots, x_n)$ vanishes identically, then $X$ would contain the linear subspace $V(x_0,x_1)$ which is again a contradiction. In particular, the only remaining possibility for $V(x_1,q,c)$ to fail to have dimension $n-4$ is when $q(0,x_2,\dots, x_n)$ vanishes identically. Since we have already seen that $c(0,x_2,\dots, x_n)$ cannot vanish identically, we have dim $F_1(X,x)=\dim V(x_1,C)=n-3$ in this case as claimed. Note that in this case $X\cap T_xX$ is the conical cubic hypersurface $V(x_1,c)\subset \PP^{n}$ with vertex $x$. In addition, it is clear that the collection of $x\in X$ for which $X\cap T_xX$ is a cone with vertex $x$ is Zariski closed. 
\end{proof}
\begin{definition}
    Suppose that $n\geq 3$. We say that $x\in X$ is an \emph{Eckardt point} if $X\cap T_xX$ is a cone with vertex $x$.
\end{definition}
Note that when $n\geq 4$, then $x\in X$ is an Eckardt point if and only if $\dim F_1(X,x)=n-3$, while when $n=3$ it is equivalent to $x$ lying on the intersection of three coplanar lines.

It follows from Lemma~\ref{Le: DimF1(X,x)Cubic} that the collection of Eckardt points is a Zariski closed subset of $X$. For our application to rational curves it is crucial to know that there are only finitely many Eckardt points on a smooth cubic hypersurface. Working in characteristic $0$, Coskun and Starr show this with the following argument: if $x$ is a Eckardt point, then the differential of the Gauss map into the dual variety of $X$ vanishes at that point. In particular, if a positive dimensional family of Eckardt points would exist, then the Gauss map would be constant along this family. This, however, would contradict the finiteness of the Gauss map. While the finiteness of the Gauss map still holds in positive characteristic, the vanishing of the differential is not enough to conclude that the Gauss map is constant and so we have to use a different approach. 

Instead, we will relate Eckardt points to special lines on cubic hypersurfaces. Given a smooth variety $Y$ and a smooth subvariety $Z\subset Y$, we let $\NN_{Z/Y}$ be the normal bundle of $Z$ in $Y$. 
\begin{definition}
    Let $L\subset X$ be a line and $n\geq 4$. We say that $L$ is of \begin{enumerate}[(i)]
        \item \emph{type I} if $\NN_{L/X}\simeq \OK_L^2\oplus \OK_L(1)^{n-4}$ and of 
        \item \emph{type II} if $\NN_{L/X}\simeq \OK_L(-1)\oplus \OK_L(1)^{n-3}$.
    \end{enumerate}
\end{definition}
It is known \cite[Lemma 2.7]{FukasawaGaussRank0} that every line on a cubic hypersurface is either of type I or II. 
Let $(\PP^n)^*$ be the dual projective space and $X^*\subset (\PP^n)^*$ the dual variety of $X$. Let $\gamma \colon X \to (\PP^n)^*$ be the Gauss map defined by the assignment $x\mapsto T_xX$, where $T_xX$ denotes the embedded tangent space of $X$ at $x$. By definition, $X^*$ is the image of $\gamma(X)$ in $(\PP^n)^*$ and a hyperplane $H\in (\PP^n)^*$ intersects $X$ tangentially if and only if $H\in X^*$.
\begin{lemma}\label{Le: Existence.SmoothHyperplane}
    Let $L\subset X$ be a line and suppose that $n\geq 4$. Then there exists a hyperplane $H\in (\PP^n)^*$ such that $L\subset H$ and $X\cap H$ is smooth.
\end{lemma}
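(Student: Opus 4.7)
The plan is to reduce the existence of a smooth hyperplane section containing $L$ to a dimension bound on the polar locus $S_L := \{p \in X : L \subset T_pX\}$, and then to establish the bound by a direct computation in coordinates.

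First I would set $\mathcal{P} := \{H \in (\PP^n)^* : L \subset H\}$, a linear subspace of $(\PP^n)^*$ of dimension $n-2 \geq 2$. Since $T_pX$ is itself a hyperplane, $X \cap H$ is singular at $p$ if and only if $H = T_pX$. Consequently, $X \cap H$ fails to be smooth precisely when $H \in \gamma(S_L)$, where $\gamma \colon X \to (\PP^n)^*$ is the Gauss map. The Gauss map of a smooth projective subvariety of $\PP^n$ is finite in any characteristic (a form of Zak's theorem on tangencies), so $\dim \gamma(S_L) = \dim S_L$. It therefore suffices to prove $\dim S_L \leq n-3$: then $\gamma(S_L)$ is a proper closed subset of $\mathcal{P}$, and any $H$ in its complement gives the required smooth hyperplane section.

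To bound $\dim S_L$, I would choose coordinates with $L = V(x_2, \ldots, x_n)$. The tangent hyperplane at $p$ is cut out by $\sum_i \partial_i F(p)\, x_i$, which contains $L$ precisely when $\partial_0 F(p) = \partial_1 F(p) = 0$. Hence $S_L$ is the scheme in $\PP^n$ cut out by the three forms $F, \partial_0 F, \partial_1 F$ of degrees $3, 2, 2$, so $\dim S_L \geq n-3$ automatically.

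The main obstacle is the matching upper bound, i.e., the statement that $F, \partial_0 F, \partial_1 F$ form a regular sequence. If they did not, $\dim S_L \geq n-2$, and finiteness of $\gamma$ would force $\mathcal{P} \subseteq X^*$, meaning every hyperplane through $L$ would be tangent to $X$. The simplest obstruction, proportionality $\partial_0 F = c\, \partial_1 F$ as polynomials, can be ruled out directly: after a linear change of coordinates in $x_0, x_1$, it would make $F$ independent of one of the variables, realising $X$ as a cylinder with an apex point, and therefore producing a singular point of $X$ — contradicting smoothness for $n \geq 4$. The subtler case, in which $\partial_0 F$ and $\partial_1 F$ share only an irreducible $(n-2)$-dimensional component $D \subset X$ without being proportional, is the genuine technical challenge. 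I would attack it by combining the Euler identity $3F = \sum_i x_i\, \partial_i F$ — which along $D \subset V(F, \partial_0 F, \partial_1 F)$ collapses to the nontrivial relation $\sum_{i \geq 2} x_i\, \partial_i F \equiv 0$ on $D$ — with the smoothness hypothesis, aiming to show that the simultaneous vanishing of $\partial_0 F, \partial_1 F$ and this relation along an $(n-2)$-dimensional subvariety forces all partials of $F$ to vanish somewhere on $D$, producing the contradictory singular point.
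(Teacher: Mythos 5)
Your reduction is sound and is in fact equivalent to the paper's: writing $W\subset(\PP^n)^*$ for the $(n-2)$-plane of hyperplanes containing $L$, finiteness of the Gauss map gives $\dim\gamma(S_L)=\dim S_L$ and $\gamma(S_L)=X^*\cap W$, so your target ``$\dim S_L\le n-3$'' is exactly the statement ``$W\not\subset X^*$'' that the paper proves. The problem is that you do not actually prove it. You dispose only of the degenerate case where $\partial_0F$ and $\partial_1F$ are proportional, and you explicitly defer the main case --- an $(n-2)$-dimensional component $D$ of $V(F,\partial_0F,\partial_1F)$ with non-proportional partials --- to an ``aim''. That case is the entire content of the lemma, and the Euler-identity plan you sketch does not obviously close it: restricting $3F=\sum_i x_i\,\partial_iF$ to $D$ yields the single relation $\sum_{i\ge2}x_i\,\partial_iF\equiv0$ on $D$, which is one equation on an $(n-2)$-dimensional variety and is far from forcing the simultaneous vanishing of the remaining $n-1$ partials at some point of $D$. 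So as written the proposal has a genuine gap at its central step.

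For comparison, the paper closes this step by a different mechanism: assuming $W\subset X^*$, it deduces that $F$ lies in the ideal $(\partial_0F,\partial_1F)$, hence $F=L_0\,\partial_0F+L_1\,\partial_1F$ for some linear forms $L_0,L_1$ by degree reasons, so that $X$ contains the linear subspace $V(L_0,L_1)$, which has dimension at least $n-2$. This contradicts the fact that a smooth hypersurface in $\PP^n$ contains no linear subspace of dimension exceeding $\lfloor(n-1)/2\rfloor$, which for $n\ge4$ is strictly less than $n-2$. The idea your proposal is missing is precisely this conversion of the failure of the codimension bound into a \emph{linear} subspace of $X$ of impossibly large dimension, rather than an attempt to manufacture a singular point of $X$ directly; if you want to salvage your approach, you should aim to extract such a linear space (or otherwise invoke the bound on linear subspaces of smooth hypersurfaces) in your remaining case.
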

\begin{proof}
    Let $W\subset (\PP^n)^*$ be the linear subspace of dimension $n-2$ parameterising hyperplanes $H\subset \PP^n$ containing $L$. To prove the lemma, it suffices to show that $W\not\subset X^*$. After a suitable change of variables we may assume that $W=V(y_0,y_1)\subset (\PP^n)^*$, so that $\gamma^{-1}(W)=V(\frac{\partial F}{\partial y_0},\frac{\partial F}{\partial x_1})$. Now if $W\subset X^*$ would hold, then $X=\gamma^{-1}(X^*)$ implies that $\gamma^{-1}(W)\subset X$. In particular, we can write $F=L_0\frac{\partial F}{\partial x_0 }+L_1\frac{\partial F}{\partial x_1 }$ for some linear forms $L_0,L_1\in K[x_0,\dots, x_n]$ and thus $X$ contains the linear subspace $V(L_0,L_1)$, which has dimension at least $n-2$. However, by \cite[Appendix]{StarrAppendix} a smooth hypersurface in $\PP^n$ can only contain linear subspaces of dimension at most $\lfloor (n-1)/2\rfloor$. Hence $n-2\leq (n-1)/2$ must hold, which implies $n\leq 3$. 
\end{proof}
\begin{lemma}\label{Le: EckardtisEckardtonHyperplane}
    Suppose that $n\geq 4$ and $x\in X$ is an Eckardt point. If $H\in (\PP^n)^*$ is a hyperplane containing $x$ such that $H\cap X$ is smooth, then $x$ is also an Eckardt point of $X\cap H$. 
\end{lemma}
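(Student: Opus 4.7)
The plan is to unpack the definitions and use that the property of being a cone with a given vertex is preserved under linear sections passing through that vertex.

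First I would observe that since $H \cap X$ is smooth and $x \in H \cap X$, the embedded tangent space of $X \cap H$ at $x$ inside $H$ equals $T_x(X \cap H) = T_xX \cap H$. Indeed, $H \cap X$ is cut out inside $X$ by the single linear equation defining $H$, and smoothness at $x$ ensures that this equation restricts to a nonzero linear form on $T_xX$, so that $T_x(X \cap H)$ has the expected codimension one in $T_xX$ and is therefore exactly $T_xX \cap H$.

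Next I would compute
\[
(X \cap H) \cap T_x(X \cap H) = X \cap H \cap (T_xX \cap H) = (X \cap T_xX) \cap H.
\]
By hypothesis $X \cap T_xX$ is a cone in $\PP^n$ with vertex $x$, meaning that for every point $y \in X \cap T_xX$ the whole line $\overline{xy}$ is contained in $X \cap T_xX$. Since $x \in H$, for any $y \in (X \cap T_xX) \cap H$ the line $\overline{xy}$ is contained both in $X \cap T_xX$ (by the cone property) and in $H$ (since two of its points lie there). Hence $(X \cap T_xX) \cap H$ is itself a cone in $H$ with vertex $x$, which is exactly the statement that $x$ is an Eckardt point of $X \cap H$.

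There is no real obstacle here: the only point worth checking is that the Eckardt point definition actually applies to $X \cap H$. Since $X \cap H$ is a smooth cubic hypersurface in $H \cong \PP^{n-1}$ and we are assuming $n \geq 4$, the ambient projective space has dimension $n-1 \geq 3$, so the definition of Eckardt point from the previous paragraph is available.
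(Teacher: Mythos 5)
Your proof is correct and takes essentially the same route as the paper's: identify $T_x(X\cap H)=T_xX\cap H$ using smoothness, and then observe that for any $y\in (X\cap T_xX)\cap H$ the line $\overline{xy}$ lies in the cone $X\cap T_xX$ by the vertex property and in $H$ because two of its points do. Your extra justification of the tangent-space identity and the check that $n-1\geq 3$ so the definition applies are fine but not substantively different from the paper's argument.
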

\begin{proof}
    Note that since $H\cap X$ is smooth, we have $T_x(X\cap H)=T_xX\cap H$. Let $y\in (X\cap H)\cap (T_x(X\cap H))$ be a point distinct from $x$ and $L$ be the line spanned by $x$ and $y$. We must then have $y\in X\cap T_xX$ and because $x$ is a vertex of the cone $X\cap T_xX$, the line $L$ is contained in $X\cap T_xX$. On the other hand, since both $x$ and $y$ are contained in $H$, $L$ must also be contained in $H$. In particular, $L$ is contained in $X\cap T_xX\cap H = (X\cap H)\cap T_x(X\cap H)$. This shows that the line spanned by $x$ and an arbitrary point $y\in (X\cap H)\cap T_x(X\cap H)$ distinct from $x$ is contained in $(X\cap H)\cap T_x(X\cap H)$ and hence $(X\cap H)\cap T_x(X\cap H)$ is a cone with vertex $x$. This means that $x$ is an Eckardt point of $X\cap H$. 


\end{proof}
\begin{lemma}\label{Le: Line.only2Eckardt}
    Let $n\geq 3$. If $\cha(K)\neq 2$, then any line $L\subset X$ contains at most two Eckardt points. If $\cha(K)=2$, then any line $L\subset X$ contains at most $5$ Eckardt points. 
\end{lemma}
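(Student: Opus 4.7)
My approach is to reduce to the case $n=3$ and then analyse the Eckardt locus on $L$ in explicit coordinates. For $n \geq 4$, Lemma~\ref{Le: Existence.SmoothHyperplane} produces a hyperplane $H$ containing $L$ with $X \cap H$ smooth, while Lemma~\ref{Le: EckardtisEckardtonHyperplane} ensures that every Eckardt point of $X$ on $L$ is also an Eckardt point of the smooth cubic $X \cap H$, whose ambient dimension is one less. Iterating this reduction leaves us with the case in which $X \subset \PP^3$ is a smooth cubic surface.

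Now assume $X \subset \PP^3$ contains $L$ and, after a linear change of coordinates, take $L = V(x_2, x_3)$, so that $F = x_2 A + x_3 B$ for quadratic forms $A, B \in K[x_0,x_1,x_2,x_3]$. For $p = (s:t:0:0) \in L$, a direct computation gives $T_p X = V(a x_2 + b x_3)$, where $a = A(s,t,0,0)$ and $b = B(s,t,0,0)$; smoothness of $X$ along $L$ forces $(a, b) \neq (0, 0)$. Parametrising $T_p X$ by coordinates $(x_0,x_1,u)$ with $(x_2, x_3) = (bu, -au)$ and substituting into $F$ yields
\[
F|_{T_p X} = u \cdot G(x_0, x_1, u)
\]
for a ternary quadratic form $G$. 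Since the line $u = 0$ recovers $L$, the point $p$ is Eckardt precisely when the residual conic $G = 0$ is singular at $p = (s:t:0)$. Writing $G = P(x_0, x_1) + u\, Q(x_0, x_1) + u^2 R$, a short expansion gives
\[
P(x_0,x_1) = B(s,t,0,0)\, A(x_0,x_1,0,0) - A(s,t,0,0)\, B(x_0,x_1,0,0),
\]
so that the vanishing of $\partial_{x_0} G$ and $\partial_{x_1} G$ at $(s,t,0)$ is equivalent to the binary quadratic $P$ being a perfect square with double root $(s:t)$, while $\partial_u G(s,t,0) = 0$ is an additional homogeneous equation $Q(s,t) = 0$ of degree $5$ in $(s,t)$.

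From here the two bounds follow from a short analysis of the pencil $\Span(A_0, B_0) \subset K[x_0,x_1]_2$, where $A_0(x_0,x_1) := A(x_0,x_1,0,0)$ and $B_0(x_0,x_1) := B(x_0,x_1,0,0)$; this pencil is base-point-free by smoothness of $X$ along $L$. In characteristic $\neq 2$, the perfect squares in such a pencil are the zeros of the nondegenerate discriminant, a binary quadratic in the pencil parameter, so there are exactly two; hence at most $2$ Eckardt points on $L$. In characteristic $2$ the locus of perfect squares is instead defined by the vanishing of the linear form giving the coefficient of $x_0 x_1$ in $\lambda A_0 + \mu B_0$, and is either a single reduced point---yielding at most one Eckardt point---or all of $\PP^1$, which happens exactly when the map $(A_0 : B_0)\colon L \to \PP^1$ is inseparable. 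In this inseparable case the first two equations become vacuous and the bound must come from $Q(s,t) = 0$ alone, giving at most $5$ Eckardt points provided $Q$ does not vanish identically. The main obstacle is therefore to rule out $Q \equiv 0$: were it to vanish identically on $L$, every point of $L$ would be an Eckardt point, and through each such point one would obtain a further pair of concurrent lines on $X$, yielding a positive-dimensional family of lines on $X$. This contradicts the classical fact, valid in every characteristic, that a smooth cubic surface contains exactly $27$ lines.
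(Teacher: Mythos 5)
Your proof is correct, and it matches the paper's overall structure: the reduction to $n=3$ via Lemmas~\ref{Le: Existence.SmoothHyperplane} and \ref{Le: EckardtisEckardtonHyperplane} is exactly the paper's induction. Where you genuinely diverge is the base case: the paper simply cites Lemma~9.4 of Dolgachev--Duncan for smooth cubic surfaces, whereas you supply a self-contained coordinate computation, writing $F=x_2A+x_3B$, identifying the Eckardt condition at $p=(s:t:0:0)$ with the singularity of the residual conic $G=P+uQ+u^2R$ at $p$, and then counting solutions of $\partial_{x_0}P(s,t)=\partial_{x_1}P(s,t)=Q(s,t)=0$. This buys independence from the reference and makes transparent why the bound jumps from $2$ to $5$ in characteristic $2$ (the perfect-square locus in the pencil degenerates from a conic section to a linear condition, which can be identically satisfied when $(A_0:B_0)$ is inseparable, leaving only the degree-$5$ equation $Q(s,t)=0$). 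Two small points are worth spelling out if you write this up: first, in characteristic $\neq 2$ the passage from ``two perfect-square members of the pencil'' to ``at most two Eckardt points'' uses that each square member $c\ell^2$ pins down at most one candidate, namely the unique root of $\ell$; second, in the $27$-lines argument you should note that $L$ is never a component of $V(G_p)$ --- this follows because $G_p|_{u=0}=P_{s,t}=B_0(s,t)A_0-A_0(s,t)B_0$ is nonzero by base-point-freeness of the pencil --- so each Eckardt point really does produce a line on $X$ other than $L$, and these are pairwise distinct as $p$ varies along $L$.
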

\begin{proof}
    We proceed by induction on $n$. The case $n=3$ is the content of Lemma~9.4 in \cite{DolgachevAutoCubicPositiveCha}. So suppose that $n>3$. By Lemma~\ref{Le: Existence.SmoothHyperplane} there exists a hyperplane $H\subset \PP^n$ containing $L$ such that $H\cap X$ is a smooth cubic hypersurface. Lemma~\ref{Le: EckardtisEckardtonHyperplane} tells us that $x$ is also an Eckardt point of $X\cap H$. The claim thus follows from the induction hypothesis. 
\end{proof}
\begin{lemma}
    Suppose that $n\geq 4$. If a line $L\subset X$ contains an Eckardt point, then it is of type II. 
\end{lemma}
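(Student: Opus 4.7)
The plan is to exploit the dimension statement from Lemma~\ref{Le: DimF1(X,x)Cubic} at an Eckardt point $x\in L$ via a tangent-space calculation on $F_1(X,x)$, and then compare with the two possible splittings of $\NN_{L/X}$.

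First, recall the standard deformation-theoretic identification
\[
T_L F_1(X,x) \simeq H^0(L,\NN_{L/X}(-1)),
\]
which arises from the short exact sequence $0\to \NN_{L/X}(-x)\to \NN_{L/X}\to \NN_{L/X}|_x\to 0$: a first-order deformation of $L$ inside $X$ is a global section of $\NN_{L/X}$, and such a deformation preserves the marked point $x$ precisely when the section vanishes at $x$. This argument is valid in arbitrary characteristic.

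Second, by \cite[Lemma 2.7]{FukasawaGaussRank0}, $\NN_{L/X}$ is either of type I, in which case
\[
\NN_{L/X}(-1)\simeq \OK_L(-1)^2\oplus \OK_L^{n-4}, \qquad h^0(L,\NN_{L/X}(-1))=n-4,
\]
or of type II, in which case
\[
\NN_{L/X}(-1)\simeq \OK_L(-2)\oplus \OK_L^{n-3}, \qquad h^0(L,\NN_{L/X}(-1))=n-3.
\]

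Third, since $x\in X$ is an Eckardt point, Lemma~\ref{Le: DimF1(X,x)Cubic} asserts that every irreducible component of $F_1(X,x)$ has dimension exactly $n-3$. Therefore
\[
\dim T_L F_1(X,x)\;\geq\;\dim_L F_1(X,x)\;=\;n-3,
\]
which forces $h^0(L,\NN_{L/X}(-1))\geq n-3$. The type I case is incompatible with this inequality, so $L$ must be of type II.

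The only potential obstacle is confirming that the standard tangent-space formula for the Fano scheme of pointed lines remains valid in positive characteristic. This is, however, a direct consequence of the description of infinitesimal deformations of a subscheme with a marked point via sections of the normal sheaf vanishing on the marked subscheme, and poses no difficulty.
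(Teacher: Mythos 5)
Your proposal is correct and follows essentially the same route as the paper: identify $T_LF_1(X,x)$ with $H^0(L,\NN_{L/X}(-1))$, compute $h^0=n-4$ in the type~I case, and contradict the fact from Lemma~\ref{Le: DimF1(X,x)Cubic} that every component of $F_1(X,x)$ has dimension $n-3$ at an Eckardt point. The extra justification of the tangent-space formula and the (unneeded) type~II computation are fine but not required.
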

\begin{proof}
    The tangent space of a line $L\in F_1(X,x)$ is isomorphic to $H^0(L, \mathcal{N}_{L/X}(-1))$. In particular, if $L$ is a line of type I, then an irreducible component of $F_1(X,x)$ containing $L$ has dimension at most $h^0(L, \OK_L(-1)^2\oplus \OK^{n-4}_L)=n-4$. However, by Lemma~\ref{Le: DimF1(X,x)Cubic} any irreducible component of $F_1(X,x)$ has dimension $n-3$ if $x$ is an Eckardt point.
\end{proof}
Let $S_2\subset F_1(X)$ be the scheme parameterising lines of type II. 
\begin{lemma}\label{Le: Dim.LinesTypeII}
    Suppose that $\cha(K)\neq 2,3$. Then it holds that $\dim S_2\leq n-3$. 
\end{lemma}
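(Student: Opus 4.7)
My plan is to realize $S_2$ as a determinantal locus inside $F_1(X)$ and then verify the expected codimension via a Zariski tangent space computation at a general point of each irreducible component. Given a line $L\in F_1(X)$, choose coordinates so that $L=V(x_2,\ldots,x_n)$ and expand
\[
F=\sum_{i\geq 2}\alpha_i(x_0,x_1)\,x_i+\sum_{i,j\geq 2}\beta_{ij}(x_0,x_1)\,x_ix_j+C(x_2,\ldots,x_n),
\]
with $\alpha_i$ binary quadratic, $\beta_{ij}=\beta_{ji}$ binary linear, and $C$ cubic in $x_2,\ldots,x_n$. Then $L$ is of type II precisely when the $n-1$ forms $\alpha_i$ span a subspace of $H^0(\OK_L(2))\cong K^3$ of dimension at most $2$. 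Globalizing over $F_1(X)$ this assembles into a morphism of locally free sheaves $\phi\colon\mathcal{E}\to\mathcal{F}$ of ranks $n-1$ and $3$ whose rank-$\leq 2$ degeneracy locus is $S_2$; the expected codimension of such a locus equals $(n-1-2)(3-2)=n-3$, matching the claim.

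To get the desired upper bound I compute the tangent space at a general $L\in S_2$. After a further coordinate change preserving $L$ I may assume $\alpha_j=0$ for $j\geq 4$ while $\alpha_2,\alpha_3$ are coprime, the latter being forced by the smoothness of $X$ at every point of $L$. A first-order deformation $(\ell_2,\ldots,\ell_n)\in H^0(\OK_L(1))^{n-1}$ lies in $T_L F_1(X)$ iff $\sum_i\alpha_i\ell_i=0$, which by coprimality forces $\ell_2=\ell_3=0$, and preserves the type-II condition to first order iff $\sum_{i\geq 4}\beta_{ij}\ell_i\in\langle\alpha_2,\alpha_3\rangle$ for each $j\geq 4$. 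This yields
\[
\dim T_L S_2=2(n-3)-\operatorname{rank}(M_L),
\]
where $M_L$ is an $(n-3)\times 2(n-3)$ matrix of scalars extracted from the Hessian of $F$ along $L$. The bound $\dim S_2\leq n-3$ reduces to showing $\operatorname{rank}(M_L)=n-3$ at a general point of each irreducible component.

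The main obstacle is ruling out the locus on which $M_L$ drops rank. A dependence relation among the rows given by a nonzero $c=(c_j)_{j\geq 4}\in K^{n-3}$ forces $\sum_{j\geq 4}c_j\beta_{ij}=0$ as a linear form for every $i\geq 4$; writing $\beta_{ij}=P_{ij}x_0+R_{ij}x_1$ with $P,R$ symmetric $(n-3)\times(n-3)$ matrices, this says $c\in\ker P\cap\ker R$. A short computation using the coprimality of $\alpha_2,\alpha_3$ and $\cha K\neq 2$ shows that any linear form $B_i\in H^0(\OK_L(1))$ satisfying $x_0B_i,x_1B_i\in\langle\alpha_2,\alpha_3\rangle$ must vanish, which is how the bad $c$ is extracted. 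Geometrically, such a $c$ means $F(y+tc)=t^3\cdot C(0,0,c_4,\ldots,c_n)$ for every $y\in L$, so either $L+K\cdot c$ is a $2$-plane contained in $X$ or a $\PP^1$-family of genuine order-three flex lines emanates from $L$. The first possibility is controlled using the bound of~\cite{StarrAppendix} on the dimension of linear subspaces of a smooth cubic hypersurface, and the second requires the characteristic not to divide the tangency order, explaining the exclusion $\cha K\neq 3$. The technical heart of the argument is a careful dimension count showing that the offending lines in each case form a proper closed subset of every irreducible component of $S_2$, so that a general point of each component has $\dim T_L S_2=n-3$ and thus every component has dimension at most $n-3$.
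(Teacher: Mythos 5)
Your setup is sound and is genuinely different from the paper, which simply cites Lemma~2.12 of \cite{HuybrechtsCubic} for this statement. The determinantal description of $S_2$ (type II iff the binary quadrics $\alpha_2,\dots,\alpha_n$ span an at most $2$-dimensional, necessarily base-point-free, subspace of $H^0(\OK_L(2))$), the identification $T_LF_1(X)\cong K^{2(n-3)}$ via the coprimality of $\alpha_2,\alpha_3$, the first-order rank condition $\sum_{i\ge4}\beta_{ij}\ell_i\in\langle\alpha_2,\alpha_3\rangle$ for $j\ge4$, and the observation that a vector $c$ in the left kernel of $M_L$ forces $B_i=\sum_j c_j\beta_{ij}=0$ for all $i\ge 4$ and hence $F(y+tc)=t^3C(c)$ along $L$, are all correct. (Two small inaccuracies: the vanishing of $B_i$ needs only that $\langle\alpha_2,\alpha_3\rangle$ is base-point-free, not $\cha(K)\neq2$; the place where $\cha(K)\neq 2$ genuinely enters is earlier, in the formula $\dot\alpha_j=2\sum_i\beta_{ij}\ell_i$ for the first-order variation of $\alpha_j$, which degenerates in characteristic $2$.)

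The genuine gap is the final step, which you yourself label ``the technical heart'': you must show that the locus of $L\in S_2$ at which $M_L$ drops rank is a \emph{proper} closed subset of \emph{every} irreducible component of $S_2$, and this is asserted rather than proved. This is where the entire difficulty of the lemma lives, and it is not a routine check. For instance, when $n\ge5$ a smooth cubic may contain $2$-planes $P$ (this is permitted by the bound of \cite{StarrAppendix}), and the type II lines lying in such planes could a priori sweep out an entire irreducible component of $S_2$ consisting of offending lines, in which case your tangent-space bound is never available on that component; the flex-line alternative likewise requires an actual dimension count and a precise use of $\cha(K)\neq3$, rather than the remark that the characteristic should not divide the tangency order. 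As written, the argument reduces the lemma to another unproven dimension bound of comparable difficulty, so the proof does not close.
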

\begin{proof}
    This is proved in Lemma 2.12 of \cite{HuybrechtsCubic}. 
\end{proof}
\begin{corollary}\label{Cor: FinitenessEckardt}
    Let $X\subset \PP^n$ be a smooth cubic hypersurface with $n\geq 4$. Then $X$ contains at most finitely many Eckardt points. 
\end{corollary}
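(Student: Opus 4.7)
The plan is to derive a contradiction from a dimension count on the incidence variety of Eckardt points lying on type II lines, thereby bypassing the Gauss map argument of \cite{CoskunStarr} which does not survive in positive characteristic. Let $E \subset X$ denote the Eckardt locus, which is Zariski closed by Proposition~\ref{Le: DimF1(X,x)Cubic}. Suppose for contradiction that some irreducible component $E_0 \subset E$ has positive dimension, and consider the incidence
\[
I = \{(x, L) \in E_0 \times F_1(X) : x \in L\}.
\]

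First I would bound $\dim I$ from below using the first projection $\pi_1 \colon I \to E_0$. For every $x \in E_0$ the fiber $\pi_1^{-1}(x)$ is identified with $F_1(X,x)$, and Proposition~\ref{Le: DimF1(X,x)Cubic} guarantees that every irreducible component of $F_1(X,x)$ has dimension $n-3$ precisely because $x$ is Eckardt. Since the fiber dimension is constant, we conclude $\dim I = \dim E_0 + (n-3) \geq n-2$.

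Next I would bound $\dim I$ from above using the second projection $\pi_2 \colon I \to F_1(X)$. The previous lemma tells us that any line through an Eckardt point is of type II, so the image of $\pi_2$ lies in $S_2$. Lemma~\ref{Le: Line.only2Eckardt} (applicable under the standing hypothesis $\cha(K) > 3$, which ensures $\cha(K) \neq 2$) shows that each line contains at most two Eckardt points, so $\pi_2$ has finite fibers. Combined with Lemma~\ref{Le: Dim.LinesTypeII}, this yields $\dim I \leq \dim S_2 \leq n-3$.

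Juxtaposing the two bounds gives $n - 2 \leq n - 3$, a contradiction. Hence every irreducible component of $E$ is zero-dimensional, and being closed in $X$, the Eckardt locus $E$ is finite. The main conceptual obstacle is of course the loss of the Gauss map argument in positive characteristic, which is why one needs the detour through the locus $S_2$ of type II lines; the characteristic hypothesis enters the corollary through its role in Lemma~\ref{Le: Dim.LinesTypeII}.
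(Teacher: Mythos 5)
Your argument is correct and is essentially the paper's own proof: the same incidence correspondence between Eckardt points and (type II) lines through them, with the lower bound on its dimension coming from Proposition~\ref{Le: DimF1(X,x)Cubic} and the upper bound from Lemmas~\ref{Le: Line.only2Eckardt} and~\ref{Le: Dim.LinesTypeII}. The only difference is presentational (contradiction from a positive-dimensional component versus the paper's direct computation $\dim S=\dim J-(n-3)\leq 0$), and you correctly flag where the characteristic hypothesis enters.
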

\begin{proof}
    Consider the incidence correspondence \[
    \begin{tikzcd}
    &J=\{(x,L)\in S\times S_2\colon x\in L\}\arrow[dl,"p_1"]\arrow[dr, "p_2"]&\\
    S && S_2
    \end{tikzcd}
    \]
    where we recall that $S\subset X$ is the set of Eckardt points of $X$. By Lemma~\ref{Le: Line.only2Eckardt} we know that $p_2$ has finite fibers and hence $\dim J= \dim S_2\leq n-3$, where the last inequality follows from Lemma~\ref{Le: Dim.LinesTypeII}. In addition, for any $x\in S$ every irreducible component of $p_2(x)^{-1}$ has dimension $n-3$ by Lemma~\ref{Le: DimF1(X,x)Cubic}.  In particular, we obtain $\dim S= \dim J-(n-3)\leq 0$, which is sufficient.
\end{proof}
\section{Kontsevich spaces of stable maps}\label{Sec: Kontsevich}
Let $X\subset \PP^n$ be a closed subvariety over an algebraically closed field $k$. For any integers $e\geq 1$ and $k\geq 0$, the Kontsevich space $\overline{\M}_{0,k}(X,e)$ parameterises the data $(f,C, p_1,\dots, p_k)$, where $C$ is a proper, connected, reduced, at worst nodal curve of arithmetic genus $0$; $p_1,\dots, p_n\in C$ are distinct smooth points of $C$ and $f\colon C\to X$ is morphism of degree $e$ satisfying the stability condition that the normalisation of a component of $C$ contracted by $f$ has at least three special points (which are points above marked points or nodes). Since we do not assume that $\cha(K)=0$, it is not necessarily true that $\bar{\M}_{0,0}(X,e)$ is a Deligne--Mumford stack, because inseparable maps can have non-reduced automorphism groups. Nevertheless, $\bar{\M}_{0,0}(X,e)$ is a proper algebraic stack of finite type with finite diagonal and admits a coarse moduli space $\bar{M}_{0,k}(X,e)$, which is a projective variety~\cite[Theorem 4.7]{deJongStarrRationallysimplyconnected}. It thus admits a finite surjective morphism from a scheme by \cite[Theorem 2.8]{KreschVistoliStackCover}. These are all the properties we need for our applications. 

Our goal is to use the bend-and-break approach invented by Harris, Roth and Starr~\cite{HarrisRothStarrRatCurvesI} to show that under the hypotheses of Theorem~\ref{Th: ModuliSpacescubicQuadric} the spaces $\bar{\M}_{0,0}(X,e)$ are of the expected dimension for any $e\geq 1$. 

Let $\M_{0,k}(X,e)\subset \bar{\M}_{0,k}(X,e)$ denote the open substack parameterising stable maps with irreducible domain and write $\evalu_e^{(k)}\colon \bar{\M}_{0,0}(X,e)\to X\times \cdots \times X$ for the evaluation morphism $(f,C, p_1,\dots, p_k)\mapsto (f(p_1),\dots, f(p_k))$. When $k=1$, we will simply write $\evalu_e$ for $\evalu_e^{(1)}$. 

The key ingredient for the argument is the following simple version of bend-and-break. 

\begin{lemma}\label{Le: BendBreak}
    There is no complete curve in $\M_{0,2}(X,e)$ contained in a fiber of the evaluation morphism $\evalu_e^{(2)}$. 
\end{lemma}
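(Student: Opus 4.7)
The plan is to argue by contradiction using a classical bend-and-break computation. Suppose $B\subset \M_{0,2}(X,e)$ is an irreducible complete curve whose image under $\evalu_e^{(2)}$ is a single point $(x_1,x_2)\in X\times X$. After replacing $B$ by its normalisation (and passing to a finite cover of the universal family if needed to rigidify the stack), I may assume $B$ is smooth, and the pulled-back universal family yields a $\PP^1$-bundle $\pi\colon \mathcal{C}\to B$ together with two disjoint sections $\sigma_1,\sigma_2\colon B\to \mathcal{C}$ arising from the marked points, and a morphism $\mu\colon \mathcal{C}\to X$ satisfying $\mu\circ\sigma_i\equiv x_i$. Because $\mathcal{C}\to B$ is a $\PP^1$-bundle with two disjoint sections, $\mathcal{C}\cong \PP_B(\OK_B\oplus L)$ for some line bundle $L$ on $B$, with N\'eron--Severi group generated by $\sigma_1$ and the fibre class $F$, subject to $\sigma_1\cdot F = 1$, $F^2 = 0$, $\sigma_1^2 = -\deg L$ and $\sigma_1\cdot \sigma_2 = 0$.

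The key input is a general very ample divisor $H$ on $X$ with $x_1,x_2\notin H$ and $\mu(\mathcal{C})\not\subset H$: then $D\defeq \mu^*H$ is an effective divisor on $\mathcal{C}$ disjoint from $\sigma_1(B)\cup \sigma_2(B)$, so $D\cdot \sigma_i = 0$, while $D\cdot F = e\deg H>0$ since $H$ is ample. Writing $D\equiv aF + b\sigma_1$ numerically and solving the resulting linear system forces $\deg L = 0$, $a = 0$ and $b>0$, so in particular $D^2 = b^2\sigma_1^2 = 0$. If $\mu$ were generically finite onto its image $Y\defeq \mu(\mathcal{C})$, then $Y$ would be a two-dimensional subvariety of $X$ and $D^2 = \deg(\mu)\cdot (H|_Y)^2 > 0$ by ampleness of $H|_Y$, contradicting the vanishing.

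Hence $\mu$ has one-dimensional image $C_0\subset X$, so every map $f_b\colon \PP^1\to X$ parameterised by $b\in B$ factors through $C_0$, and therefore through the normalisation $\tilde{C}_0$; this forces $\tilde{C}_0\cong \PP^1$. Consequently, $B$ factors through the moduli space $\M_{0,2}(\PP^1,d)_{(y_1,y_2)}$ of degree-$d$ self-maps of $\PP^1$ sending two fixed marked points to two fixed targets, where $d = e/\deg_X(C_0)$. A direct parametrisation of such maps by their polynomial coefficients (normalising one coefficient via the overall scaling and then quotienting by the source $\mathbb{G}_m$) realises this moduli as an open subvariety of affine space. Any complete subvariety of an affine variety is finite, so $B\to \M_{0,2}(\PP^1,d)_{(y_1,y_2)}$ must be constant, contradicting the non-triviality of $B$ as a curve in $\M_{0,2}(X,e)$.

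The chief obstacle is the degenerate one-dimensional image case: the intersection-theoretic bend-and-break on the ruled surface alone only rules out the generically finite possibility, and I need the separate affineness argument for the moduli of degree-$d$ self-maps of $\PP^1$ fixing two points to close out the remaining case.
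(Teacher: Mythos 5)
Your proposal is correct. Note, though, that the paper does not actually prove this lemma: it simply cites Lemma~5.1 of Harris--Roth--Starr and observes that the argument there is characteristic-independent. What you have written is, in effect, the standard proof behind that citation: rigidify over a finite cover of the parameter curve, form the ruled surface $\PP_B(\OK_B\oplus L)$ with its two contracted sections, compute that $(\mu^*H)^2=0$ to exclude the generically finite case, and then dispose of the residual case (all curves sharing a one-dimensional image) by the affineness of the rigidified space of degree-$d$ maps $\PP^1\to\PP^1$ with prescribed values at $0$ and $\infty$. Every step is visibly characteristic-free, so your write-up substantiates the paper's parenthetical remark. Two small points you should tighten. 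First, after quotienting by the residual source $\mathbb{G}_m$ the moduli space is not literally ``an open subvariety of affine space''; normalising a second coefficient only kills $\mathbb{G}_m$ up to a finite group (roots of unity), so what you get is a finite quotient of a closed subvariety of a basic affine open --- equivalently, the GIT quotient of an affine variety by $\mathbb{G}_m$ acting with closed orbits and finite stabilisers. This is still affine, so the conclusion that it contains no complete curves is intact, but the phrasing should be corrected. Second, you should remark that the argument nowhere uses $x_1\neq x_2$ (the sections are disjoint because the marked points are distinct on the domain, not because their images differ), so fibers of $\evalu_e^{(2)}$ over the diagonal are covered as well; and that the lifted marked-point images $y_i\in\tilde{C}_0$ are constant in $b$ because they vary in the finite set $\nu^{-1}(x_i)$ over a connected base.
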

\begin{proof}
    This is Lemma~5.1 of \cite{HarrisRothStarrRatCurvesI}. Going through the argument and the references therein, one sees that the argument does not use that assumption that the characteristic of the ground field is 0. 
\end{proof}

We will use bend-and-break in the form of the following corollary, which is analogous to Proposition~3.4 of \cite{RiedlYang}.
\begin{corollary}\label{Cor: BendBreak}
    Let $x\in X$ and suppose that $Y\subset \evalu_e^{-1}(x)$ is a closed substack of dimension at least $\dim(X)$. Then $Y$ parameterises stable maps with reducible domain and their locus has codimension at most 1 in $Y$. 
\end{corollary}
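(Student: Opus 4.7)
The plan is to apply Lemma~\ref{Le: BendBreak} after passing to the $2$-pointed Kontsevich space. Let $\pi\colon \bar{\M}_{0,2}(X,e)\to \bar{\M}_{0,1}(X,e)$ be the forgetful morphism (the universal curve) and set $Y':=\pi^{-1}(Y)$, a proper closed substack of dimension $\dim Y+1$. The first coordinate of $\evalu_e^{(2)}$ is constantly $x$ on $Y'$, so this evaluation restricts to a morphism $g\colon Y'\to X$, $(f,C,p_1,p_2)\mapsto f(p_2)$.

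The key geometric input is a description of the reducible boundary of $Y'$. Writing $Y^R\subset Y$ for the locus of stable maps with reducible domain, one has $Y'\setminus \M_{0,2}(X,e)=\pi^{-1}(Y^R)\cup T$, where $T$ consists of $2$-pointed stable maps whose domain is reducible but becomes irreducible after forgetting $q_2$ and stabilising. Stability forces such a degeneration to consist of a contracted rational tail carrying both marked points, so $f'(q_2)$ coincides with the image of the surviving marked point, namely $x$. Hence $g(T)=\set{x}$.

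Now fix a general $x'\in g(Y')\setminus\set{x}$ and let $Z:=g^{-1}(x')$. By the fibre dimension theorem, $Z$ is a proper closed substack with $\dim Z\geq \dim Y+1-\dim g(Y')\geq 1$, and $Z\cap T=\emptyset$, so $Z\cap\bigl(Y'\setminus\M_{0,2}(X,e)\bigr)=Z\cap \pi^{-1}(Y^R)$. If this intersection had dimension at most $\dim Z-2$, a general complete intersection of hyperplanes on the projective coarse moduli space of $Z$ would yield a complete curve contained in $Z\cap \M_{0,2}(X,e)$, and hence a complete curve inside $\M_{0,2}(X,e)$ lying in a fibre of $\evalu_e^{(2)}$, contradicting Lemma~\ref{Le: BendBreak}. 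Therefore $\dim\bigl(Z\cap \pi^{-1}(Y^R)\bigr)\geq \dim Z-1\geq \dim Y-\dim g(Y')$.

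Combining this fibrewise lower bound with the inclusion $g(\pi^{-1}(Y^R))\supset g(Y')\setminus\set{x}$ established above, I obtain $\dim\pi^{-1}(Y^R)\geq \dim g(Y')+\bigl(\dim Y-\dim g(Y')\bigr)=\dim Y$. Since the projection $\pi^{-1}(Y^R)\to Y^R$ has $1$-dimensional fibres, this gives $\dim Y^R\geq \dim Y-1$, simultaneously yielding non-emptiness of $Y^R$ and the stated codimension bound. The main obstacle I expect is justifying the Bertini-style slicing on the coarse moduli space of $Z$ in positive characteristic, and properly handling the combinatorial description of $T$ needed to force $g(T)=\set{x}$.
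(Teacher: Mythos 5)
Your argument is correct and follows essentially the same route as the paper's proof: pass to $\bar{\M}_{0,2}(X,e)$, use the second evaluation map $f(p_2)$, and rule out a codimension-$\geq 2$ boundary by producing a complete curve in the projective coarse moduli space of a fibre that avoids it, contradicting Lemma~\ref{Le: BendBreak}. Your explicit treatment of the spurious boundary $T$ (contracted bubbles carrying both marked points, forcing $f(p_2)=x$) is if anything more careful than the paper's; the two points you flag as obstacles are not real ones, since the slicing needs no Bertini theorem (a general complete-intersection curve in projective space misses any closed subset of codimension at least $2$), though one should add, as the paper does, that the curve in the coarse space must be pulled back through a finite scheme cover of the stack before applying the lemma.
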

\begin{proof}
    We begin by showing that $Y$ parameterises maps with reducible domain. 
    Let $Z\subset \bar{\M}_{0,2}(X,e)$ be the preimage of $Y$ under the morphism $\bar{\M}_{0,2}(X,e)\to \bar{\M}_{0,1}(X,e)$, so that $Z$ has dimension at least $\dim(Y)+1$. Consider the morphism $\phi\colon Z\to X$ sending $(f,C, p_1,p_2)$ to $f(p_2)$. Then because $\dim Z >\dim(Y)\geq \dim (X)$, for $y\in X$ that lies in the image of $\phi$, we have $\dim \phi^{-1}(y)\geq 1$. In addition, any $(f,C, p_1,p_2)\in \phi^{-1}(y)$ satisfies $f(p_1)=x$ and $f(p_2)=y$ by construction. Lemma~\ref{Le: BendBreak} thus implies that $Z$ contains maps with reducible domain and hence so does $Y$. 

    Let $\partial Z = Z \cap (\bar{\M}_{0,2}(X,e)\setminus \M_{0,2}(X,e))$ be the locus of reducible maps in $Z$. To prove that the locus of maps with reducible domain inside $Y$ has codimension at most 1, it suffices to prove the same statement for $Z$. To do so, it is enough to show that for any $y\in Y$ in the image of $\phi$, the codimension of $\phi^{-1}(y)\cap \partial Z$ in $\phi^{-1}(y)$ is at most 1. Suppose the contrary holds. Then there exists $y\in X$ such that $\phi^{-1}(y)\cap \partial Z$ has codimension at least 2 in $\phi^{-1}(y)$. Let $S$ be the coarse moduli space of $\phi^{-1}(y)$ inside $\bar{M}_{0,2}(X,e)$ and $\partial S$ be the coarse moduli space of $\phi^{-1}(y)\cap \partial Z$ inside $\bar{M}_{0,2}(X,e)$. Then because $S$ is projective and $\partial S$ has codimension at least $2$ in $S$, we can find a curve $C$ in $S$ that does not intersect $\partial S$. Since $\bar{\M}_{0,2}(X,e)$ admits a finite surjective morphism from a scheme, we can pull-back $C$ to a complete curve $C'\to \phi^{-1}(y)$ whose image does not intersect $\phi^{-1}(y)\cap \partial Z$. However, then the image of $C'$ must be contained in the fiber of the evaluation morphism $\M_{0,2}(X,e)$ above $(x,y)$ and thus Lemma~\ref{Le: BendBreak} implies that $C'\to \phi^{-1}(y)$ is constant, contradicting our construction of $C'$.
\end{proof}

Let $X\subset \PP^{n}$ be a complete intersection defined by forms of degree $d_1,\dots, d_R$ respectively, so that $\dim(X)=n-R$. We will write $d=d_1+\cdots +d_R$ and refer to 
\[
e_0\coloneqq \left\lceil \frac{d-R}{n+1-d}\right\rceil
\]
as the threshold degree of the tuple $(n,d_1,\dots, d_R)$. Note that our threshold degree can be slightly smaller than the one defined in \cite[(54)]{HarrisRothStarrRatCurvesI}, which comes from the fact that we modified the proof structure of the next result, which we can because we are only interested in the dimension and not the irreducibility of the moduli spaces in question. 
\begin{proposition}\label{Prop: Inductive bend and break}
    Let $X\subset \PP^n$ be a complete intersection as above with $n+1-d\geq 2$. Suppose that there exists a finite set $S\subset X$ such that $\dim \evalu_e^{-1}(x)=e(n+1-d)-2+\delta_{x\in S}$ for every $x\in X$ and $1\leq e \leq e_0$. Then the same is true for every $e\geq 1$. 
\end{proposition}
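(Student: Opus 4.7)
\emph{Plan.} I would induct on $e$; the case $1 \le e \le e_0$ is the hypothesis. The lower bound $\dim \evalu_e^{-1}(x) \ge e(n+1-d) - 2$ is automatic from deformation theory whenever the fiber is non-empty, and the excess $+1$ at $x \in S$ propagates to higher degrees by a standard gluing/smoothing argument; the real content is the matching upper bound.

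I argue the upper bound by contradiction. Suppose some irreducible component $Z \subset \evalu_e^{-1}(x)$ satisfies $\dim Z \ge e(n+1-d) - 1 + \delta_{x \in S}$. The definition of $e_0$ gives $(e_0+1)(n+1-d) \ge n + 1 - R$, so for $e > e_0$ we have $\dim Z \ge e(n+1-d) - 1 \ge n - R = \dim X$, which is exactly what is needed to invoke Corollary~\ref{Cor: BendBreak}. Applying it, we find $\dim \partial Z \ge \dim Z - 1 \ge e(n+1-d) - 2 + \delta_{x \in S}$, where $\partial Z \subset Z$ denotes the reducible-domain locus.

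I then stratify $\partial Z$ by combinatorial type and bound each stratum via the induction hypothesis. The principal stratum $\Delta_{e_1, e_2}$ parameterizes maps whose domain is $C_1 \cup_q C_2$ with $p_1 \in C_1$, $\deg(f|_{C_i}) = e_i$, $e_1 + e_2 = e$, and $e_i \ge 1$. It admits a surjection from the fibered product, over $X$, of $U_1 = \{(f_1, p_1, q) \in \bar{\M}_{0,2}(X, e_1) : f_1(p_1) = x\}$ with $\bar{\M}_{0,1}(X, e_2)$, the two maps to $X$ being $(f_1, p_1, q) \mapsto f_1(q)$ and the evaluation $\evalu_{e_2}$ respectively. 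For $y \in X$, the fiber $V_y$ of $U_1 \to X$ over $y$ maps with finite fibers (at most $e_1$ points, since $\deg f_1 = e_1$) onto $(\evalu_{e_1}^{(1)})^{-1}(x)$ by forgetting $q$, so $\dim V_y \le e_1(n+1-d) - 2 + \delta_{x \in S}$ by induction. Combined with $\dim \evalu_{e_2}^{-1}(y) = e_2(n+1-d) - 2 + \delta_{y \in S}$, this yields
\[
\dim(\Delta_{e_1, e_2} \cap \evalu_e^{-1}(x)) \le \max_{y \in X}\bigl[\dim V_y + \dim \evalu_{e_2}^{-1}(y)\bigr] \le e(n+1-d) - 3 + \delta_{x \in S},
\]
since $\delta_{y \in S} \le 1$. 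Taking the max over the finitely many splittings preserves this bound, contradicting $\dim \partial Z \ge e(n+1-d) - 2 + \delta_{x \in S}$.

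The main obstacle is handling the remaining strata: those with more than two components, or where the component carrying $p_1$ is contracted. I would treat these by the same regrouping trick: write $C = C_1 \cup_q C_2$ with $p_1 \in C_1$ (possibly contracted, with its three required special points providing extra rigidification) and $C_2$ a possibly reducible genus-$0$ stable map of degree $e_2 = e - \deg(f|_{C_1})$, then iterate the fiber-product estimate, invoking the induction hypothesis at each lower-degree piece. Each such stratum should fall under the same bound $e(n+1-d) - 3 + \delta_{x \in S}$, completing the contradiction.
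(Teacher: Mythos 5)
Your overall architecture is the same as the paper's: induct on $e$, derive a contradiction from a too-large component $Z\subset\evalu_e^{-1}(x)$ via Corollary~\ref{Cor: BendBreak} (using the definition of $e_0$ to force $\dim Z\geq\dim X$), and bound the reducible-domain locus by covering it with fiber products of lower-degree spaces and applying the induction hypothesis. The gap is in the central dimension count. The inequality $\dim\bigl(U_1\times_X\bar{\M}_{0,1}(X,e_2)\bigr)\leq\max_{y}\bigl[\dim V_y+\dim\evalu_{e_2}^{-1}(y)\bigr]$ is false as stated: the right-hand side is only the maximal fiber dimension of the map to $X$, and you must add the dimension of the locus swept out by the attachment point $y=f_1(q)$, which is positive-dimensional in general. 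Doing the count correctly, $\dim U_1=\dim\evalu_{e_1}^{-1}(x)+1$ (the marked point $q$ moves in a one-parameter family), and the projection of the fiber product to $U_1$ has fibers $\evalu_{e_2}^{-1}(f_1(q))$ of dimension up to $e_2(n+1-d)-1$; this gives only $e(n+1-d)-2+\delta_{x\in S}$, exactly one too large to contradict $\dim\partial Z\geq\dim Z-1$. Your arithmetic lands on the right number only because the omitted term $\dim\{y\}$ and the unused slack in ``$\delta_{y\in S}\leq 1$'' happen to offset each other. The actual content of the step is the dichotomy the paper runs: either $f_1(q)\in S$, in which case $q$ is confined to the finite set $f_1^{-1}(S)$ and the extra $+1$ from the moving marked point disappears, or $f_1(q)\notin S$, in which case the second factor contributes only $e_2(n+1-d)-2$; one can never collect both $+1$'s at once.

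The second issue is the case where the component carrying $p_1$ (or the one carrying $q$) is contracted by $f$: there both your ``finite fibers'' claim and the dichotomy above fail, and your plan defers this to ``the same regrouping trick''. It genuinely needs a separate mechanism. In the paper this is an inner induction proving the stronger bound $\dim\partial M_{e'}(x)\leq e'(n+1-d)-3+\delta_{x\in S}$ for all $e'\leq e$ for the boundary loci themselves, so that when a contracted component forces the image into the boundary of a lower-degree space, the sharper boundary estimate recovers the lost dimension. Without the corrected fiber-product count and this inner induction, the final contradiction does not go through.
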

\begin{proof}
    We will prove the result by induction. The case $e\leq e_0$ holds by assumption and we can thus assume that $e>e_0$ and the result is true for every $1\leq e'<e$. 

    Before showing the result, we prove an auxiliary statement. For $x\in X$, let $\partial M_{e'}(x)$ be the locus of maps with reducible domain in $\evalu_{e'}^{-1}(x)$. We will now show that for every $1\leq e'\leq e$, we have $\dim \partial M_{e'}(x)\leq e'(n+1-d)-3+\delta_{x\in S}$. We will again proceed by induction. For $e'=1$ it is clear that $\partial M_{e'}(x)=\emptyset$, hence $\dim \partial M_{e'}(x)=-1 \leq n+1-d-3$, since we assume $n+d-1\geq 2$. 
    
    Let us now assume $e'>1$ and that the result holds for every $1\leq e'' <e'$. Let $N_{e''}(x)\subset \bar{\M}_{0,2}(X,e)$ be the preimage of $\evalu_{e''}^{-1}(x)$ under the morphism forgetting the second marked point. Then we can cover $\partial M_{e'}(x)$ with stacks of the form $\bar{\M}_{0,1}(X,e'-e'')\times_X N_{e''}(x)$ for $1\leq e'' <e'$, where we use the evaluation morphism $\bar{\M}_{0,1}(X,e)\to X$ and the morphism $N_{e''}(x)$ sending $(f,C,p_1,p_2)$ to $f(p_2)$ to define the fiber product. Explicitly this means that we take stable maps $(C,f,p_1,p_2), (C', f', p')$ such that $f(p_1)=x$, $f(p_2)=f'(p')$ and glue them along $p_2$ and $p'$ (see \cite[Proposition~2.4]{BehrendManinStacks} for the gluing construction). 

    Let $W\subset \bar{\M}_{0,1}(X,e'-e'')\times_X N_{e''}(x)$ be an irreducible component, $V\subset N_{e''}(x)$ its image under the projection $\pi \colon \bar{\M}_{0,1}(X,e'-e'')\times_X N_{e''}(x)\to N_{e''}(x)$ and $Y\subset  \evalu_{e''}^{-1}(x)$ the image of $V$ under the morphism $N_{e''}(x)\subset \bar{\M}_{0,2}(X,e')\to \bar{\M}_{0,1}(X,e')$ forgetting the second marked point. Let $(f,C,p_1,p_2)\in V$ be a general map, so that $\dim W = \dim V +\dim \pi^{-1}(f,C,p_1,p_2)$. Also note that $\pi^{-1}(f,C,p_1,p_2)$ can be identified with $\evalu^{-1}_{e'-e''}(f(p_2))$, which together with the first induction hypothesis gives 
    \begin{equation}\label{LALALALA}
    \dim W \leq \dim V + \dim \evalu^{-1}_{e'-e''}(f(p_2)) \leq \dim V + (e'-e'')(n+1-d)-2+\delta_{f(p_2)\in S}. 
    \end{equation}
\noindent
    \textit{Case I:} Assume that $f(p_2)\in S$. Then since $S$ is finite, we have $\dim V=\dim Y$ unless $p_1$ lies on a component of $C$ contracted by $f$, in which case $\dim V=\dim Y+1$. In the latter case $C$ must be reducible, which implies that $Y\subset \partial M_{e''}(x)$. The induction hypothesis now implies that $\dim Y \leq e''(n+1-d) -3+\delta_{x\in S}$ and hence by \eqref{LALALALA}
    \[
    \dim W \leq (e''(n+1-d)-3+\delta_{x\in S})+1 +(e'-e'')(n+1-d)-2+1 = e'(n+1-d)-3+\delta_{x\in S},
    \]
    which is sufficient. 

    If $p_1$ does not belong to a component of $C$ contracted by $f$, then $\dim V= \dim Y$. Since $Y\subset \evalu^{-1}_{e''}(x)$, the first induction hypothesis and \eqref{LALALALA} give 
    \[
    \dim W \leq e''(n+1-d)-2+\delta_{x\in S} +(e'-e'')(n+1-d)-1= e'(n+1-d)-3+\delta_{x\in S},
    \]
    which is again satisfactory.
    
    \noindent
    \textit{Case II:} Assume that $f(p_2)\not\in S$. Here the first induction hypothesis gives 
    \[
    \dim V \leq \dim Y+1 \leq \dim \evalu^{-1}_{e''}(x)+1\leq e''(n+1-d)-1+\delta_{x\in S} 
    \]
    and hence by \eqref{LALALALA}
    \[
    \dim W \leq e''(n+1-d)-1+\delta_{x\in S} +(e'-e'')(n+1-d)-2 = e'(n+1-d)-3+\delta_{x\in S},
    \]
    which completes our verification of the claim that $\dim \partial M_{e'}(x)\leq e'(n+1-d)-3+\delta_{x\in S}$ for every $1\leq e' \leq e$. 

    We can now return to our original goal of showing that $\dim \evalu_e^{-1}(x)\leq e(n+1-d)-2+\delta_{x\in S}$. Suppose this is not true. This means that there exists an $x\in X$ and an irreducible component $Z$ of $\evalu_{e}^{-1}(x)$ such that $\dim Z \geq e(n+1-d)-1+\delta_{x\in S}$. However, this implies 
    \[
    \dim Z \geq (e_0+1)(n+1-d)-1\geq n-R=\dim X, 
    \]
    where $e_0$ was defined in precisely such a way that the second inequality always holds. In particular, Corollary~\ref{Cor: BendBreak} implies that $Z$ contains maps with reducible domain and their locus $\partial Z$ has codimension at most 1 in $Z$. By what we have shown for reducible maps, it follows that 
    \[
    \dim Z \leq \dim \partial Z +1 \leq e(n+1-d)-2+\delta_{x\in S}. 
    \]
    This contradicts our assumption that $\dim \evalu_{e}^{-1}(x) > e(n+1-d)-2-\delta_{x\in S}$ and so $\dim \evalu_{e}^{-1}(x)\leq e(n+1-d)-2+\delta_{x\in S}$ must hold. 
\end{proof}

We can now apply our result to smooth cubic hypersurfaces and intersections of two quadrics, using the results about lines from the previous section. 

\begin{proof}[Proof of Theorem~\ref{Th: ModuliSpacescubicQuadric}]
    Let $X\subset \PP^n$ be either a smooth cubic hypersurface or a smooth intersection of two quadrics of dimension at least 3 and write $d$ for the respective degree.

    We want to combine our findings of Sections~\ref{Se: Linescubic} and \ref{Sec: Kontsevich}. We begin with a few reduction steps. Since any non-empty irreducible component of $\Mor_e(\PP^1,X)$ is of dimension at least $e(n+1-d)+\dim(X)$, we only have to establish the same upper bound.
    Note that since $\M_{0,0}(X,e)$ can be realised as the stack quotient $[\Mor_e(\PP^1,X)/\PGL_2]$, to prove that $\Mor_e(\PP^1,X)$ is of the expected dimension, it suffices to show that $\dim (\M_{0,0}(X,e))\leq e(n+1-d)+\dim(X)-3$. In addition, it reduces to proving that $\dim(\bar{\M}_{0,1}(X,e))\leq e(n+1-d)+\dim(X)-2$, because $\M_{0,0}(X,e)$ is an open substack of $\bar{\M}_{0,0}(X,e)$ and $\dim(\bar{\M}_{0,1}(X,e))=\dim(\bar{\M}_{0,0}(X,e))+1$. This will follow if we can show that there exists a finite set $S\subset X$ such that 
    \begin{equation}\label{Eq: YES}
    \dim(\evalu_{e}^{-1}(x))=e(n+1-d)-2+\delta_{x\in S}
    \end{equation}
    for any $x\in X$ and any $e\geq 1$. 

When $e=1$, this is precisely the content of Propositions~\ref{Prop: LinesIntersectionQuadrics} and \ref{Le: DimF1(X,x)Cubic} with $S$ being the set of Eckardt points when $X$ is a cubic hypersurface, which is finite by Corollary~\ref{Cor: FinitenessEckardt}, and $S$ being the empty set when $X$ is an intersection of two quadrics. In particular, \eqref{Eq: YES} will follow from Proposition~\ref{Prop: Inductive bend and break} if the required hypotheses are satisfied. We clearly have $n+1-d\geq 2$ and 
\[
e_0=\left\lceil \frac{2}{n+1-d}\right\rceil =\begin{cases}
    \left\lceil \frac{2}{n-3}\right\rceil &\text{if }d=4,\\
    \left\lceil \frac{2}{n-2}\right\rceil &\text{if }d=3.
\end{cases}
\]
It is now easy to check that $e_0\leq 1$ when $\dim(X)\geq 3$, which completes our proof. 
\end{proof}
\printbibliography
\end{document}